\documentclass[11pt]{amsart}

\pdfpageattr{/Group <</S /Transparency /I true /CS /DeviceRGB>>}

%
%

\usepackage{amsfonts,amsmath,amssymb,amsthm}
\usepackage[textwidth=5in, textheight=8.37in, twoside=false]{geometry}
\usepackage[english]{babel}
\usepackage{tikz}
\usepackage{tkz-euclide}
\usetikzlibrary{decorations.markings,shapes}
\usepackage{babel}
\usepackage{graphicx}
\usepackage{subcaption}
\usepackage{textcomp}
\usepackage{bbm}
\usepackage[T1]{fontenc}
\usepackage[charter]{mathdesign}
\usepackage{paralist}
\usepackage{enumitem}
\usepackage[colorinlistoftodos, bordercolor=orange, backgroundcolor=orange!20, linecolor=orange, textsize=scriptsize]{todonotes}
\usepackage{aliascnt}
\usepackage{xcolor}
\usepackage{csquotes}
\usepackage{microtype}
\usepackage{subcaption}
\usepackage{xspace}

%
%

\DeclareMathOperator{\Triang}{Triang}
\newcommand{\Riem}{\mathcal{R}}
\newcommand{\Sgn}{\mathcal{S}_{g,n}}
\newcommand{\Teich}{\mathcal{T}_{g,n}}
\newcommand{\decTeich}{\widetilde{\mathcal{T}}_{g,n}}
\newcommand\HH{{\mathbb H}}

\DeclareMathOperator{\SO}{SO^+(2,1)}
\newcommand{\HS}{U}
\DeclareMathOperator{\PSL2R}{PSL_2(\mathbb{R})}
\newcommand{\hemi}{N}
\newcommand{\Klein}{K}
\newcommand{\Lplus}{L^+}

\newcommand{\B}{\mathcal{B}}
\newcommand{\C}{\mathcal{C}}
\newcommand{\DD}{D}
\newcommand{\triang}{T}


\DeclareMathOperator{\secpoly}{\Sigma\operatorname{-poly}}
\DeclareMathOperator{\secfan}{\Sigma\operatorname{-fan}}

\newcommand\R{\mathbb{R}}

\newcommand{\Rpos}{\mathbb{R}_{\geq 0}}
\newcommand{\scalp}[1]{\langle #1 \rangle}

\DeclareMathOperator{\vol}{vol}
\DeclareMathOperator{\conv}{conv}

\newcommand{\Cone}{C}
\newcommand{\itr}{\Delta}
\newcommand{\hl}{\alpha}
\newcommand\polymake{\texttt{polymake}\xspace}
\newcommand\gfan{\texttt{Gfan}\xspace}
\newcommand{\PP}{\mathcal{P}}

%
%


%


%
%

\theoremstyle{plain}
\newtheorem{theorem}{Theorem}[section]
\newtheorem{proposition}[theorem]{Proposition}
\newtheorem{lemma}[theorem]{Lemma}
\newtheorem{corollary}[theorem]{Corollary}
\newtheorem{thmdef}[theorem]{Theorem and Definition}
\theoremstyle{definition}
\newtheorem{definition}[theorem]{Definition}
\newtheorem{example}[theorem]{Example}
\newtheorem{question}[theorem]{Question}
\newtheorem{remark}[theorem]{Remark}

%
%

\definecolor{blue1}{rgb}{0.3,0.65,1}
\definecolor{blue2text}{rgb}{0.3,0.3,1}
\definecolor{blue2}{rgb}{0.2,0.3,1}
\definecolor{red1}{rgb}{1,0.75,0.75}
\definecolor{red2}{rgb}{1,0.25,0.35}
\definecolor{red2text}{rgb}{1,0.16,0.25}

%
%

\include{pictures/tikz_examples}

%
%

\usepackage{hyperref}
\definecolor{links}{rgb}{.2,.1,.5}
\definecolor{cites}{rgb}{.5,.1,.2}
\hypersetup{colorlinks,linkcolor=links,citecolor=cites,urlcolor=links}


\AtBeginDocument{
	\setlength{\leftmargini}{2em}
	\setlength{\leftmarginii}{2em}
	\setlength{\leftmarginiii}{2em}
	\setlength{\leftmarginiv}{2em}
}

\setenumerate{noitemsep,topsep=0.5em,parsep=0.5em,partopsep=0.5em}


\title[Secondary Polyhedra of Riemann Surfaces]{Secondary Fans and Secondary Polyhedra\\ of Punctured Riemann Surfaces}

\author[M. Joswig, R. L\"owe \and B. Springborn]{Michael Joswig, Robert L\"owe \and Boris Springborn}

\address{Technische Universit\"at  Berlin \\ Institut f{\"u}r Mathematik \\ Str.\ des 17. Juni 136 \\ 10623 Berlin, Germany}

\thanks{%
  This work is supported by DFG via SFB-TRR 109: ``Discretization in Geometry and Dynamics''.
  Further support for M. Joswig from Einstein Foundation Berlin (within the framework of Matheon) and DFG via SFB-TRR 195: ``Symbolic Tools in Mathematics and their Application''.
}

\subjclass[2010]{30F60 (32G15, 52B12, 57M50)}

\keywords{ideal triangulations; decorated Teichm\"uller space}

\begin{document}

\begin{abstract}
  A famous construction of Gel'fand, Kapranov and Zelevinsky associates to each finite point configuration $A \subset \R^d$ a polyhedral fan, which stratifies the space of weight vectors by the combinatorial types of regular subdivisions of $A$. 
  That fan arises as the normal fan of a convex polytope.
  In a completely analogous way we associate to each hyperbolic Riemann surface $\Riem$ with punctures a polyhedral fan.
  Its cones correspond to the ideal cell decompositions of $\Riem$ that occur as the horocyclic Delaunay decompositions which arise via the convex hull construction of Epstein and Penner.
  Similar to the classical case, this \emph{secondary fan} of $\Riem$ turns out to be the normal fan of a convex polyhedron, the \emph{secondary polyhedron} of $\Riem$.
\end{abstract}

\maketitle


\section{Introduction}
\noindent
Our goal is to employ techniques from geometric combinatorics to further the understanding of the space of ideal Delaunay decompositions of a punctured Riemann surface.
We believe that this is relevant to researchers in hyperbolic geometry.
Combinatorialists may benefit from seeing how far their methods carry.

A punctured Riemann surface $\Riem$ (of genus $g$ with $n$ punctures)
is $\Sgn$, the closed oriented surface of genus $g$ with $n$ punctures,
equipped with a complete hyperbolic metric of finite
area. Throughout this article we assume that the Euler characteristic
$2-2g-n$ is negative and $n\geq1$. The punctures of $\Sgn$ correspond to cusps of
$\Riem$ (see Figure~\ref{fig:cusp}).  The space of complete hyperbolic
metrics with finite area on $\Sgn$, up to isotopy, is known as the
\emph{Teichm\"uller space}~$\Teich$.
Penner~\cite{penner1} suggested to equip a punctured Riemann surface
with an additional choice of horocycles, one for each
puncture.  This is known as a \emph{decoration} of $\Riem$.  Via their
lengths, such a choice of horocycles can be described by a vector of
$n$ positive real numbers, the \emph{weight vector} of the decoration.
In this way we obtain the \emph{decorated Teichm\"uller space}
$\decTeich$, which is a trivial $\R_{>0}^n$-bundle over~$\Teich$.
Akiyoshi \cite{akiyoshi} used partial decorations with weight
vectors in $\R_{\geq0}^n\setminus\{0\}$.

Epstein and Penner \cite{epstein_penner, penner1} employed a
\emph{convex hull construction} to show that a decoration of $\Riem$
determines an ideal cell decomposition of $\Riem$. This construction
involves the hyperboloid model of the hyperbolic plane $H$ and the
representation of $\Riem$ as a quotient $H/\Gamma$ with respect to a
group $\Gamma$ of hyperbolic isometries. Distinguishing decorated
Riemann surfaces by the induced decomposition of $\Sgn$, one obtains a
cell decomposition of the decorated Teichm{\"u}ller space
$\decTeich$~\cite{penner1, pennerbook}. In this article we focus on
individual fibers of $\decTeich$, i.e., we consider fixed Riemann
surfaces with variable decoration. If the Riemann surface is fixed,
only a finite number of ideal cell decompositions occur as result of
the the convex hull construction~\cite{akiyoshi}.

Our first observation is that for a fixed Riemann surface $\Riem$, the
weight vectors which induce the same ideal cell decomposition of
$\Riem$ form a relatively open polyhedral cone
(Theorem~\ref{thm:cones}).  Moreover, since these \emph{secondary
  cones} meet face-to-face, we obtain a polyhedral fan, the
\emph{secondary fan} of $\Riem$ (cf.\ Definition~\ref{def:sfan}).
In particular, this
shows that the refinement poset of ideal Delaunay decompositions is a
lattice with the minimal and maximal elements removed: If the set of
ideal Delaunay decompositions contains a common refinement for two of
its elements, then it contains a unique coarsest common refinement,
and if it contains a common coarsening, then it contains a unique
finest common coarsening.

This is very similar to the secondary cones and secondary fans of
point configurations in $\R^{d}$, 
which were introduced by Gel'fand, Kapranov and Zelevinsky
\cite{gkz}, and which are the fundamental building blocks of a theory
with numerous applications to combinatorics, optimization, algebra
and other parts of mathematics; see the monograph of De Loera, Rambau and Santos
\cite{bibel}.  A key theorem says that the secondary fan of a
Euclidean point configuration arises as the normal fan of a convex
polytope, the \emph{secondary polytope} of the point
configuration. Our main result (Theorem~\ref{thm:normalfan}) is a
complete analog for punctured Riemann surfaces: The secondary fan
of $\Riem$ is the normal fan of a \emph{secondary polyhedron} of
$\Riem$ (cf.\ Definition~\ref{def:secpoly}).

Despite the close analogy, there are some notable differences between
the classical theory of secondary polytopes and our version for
punctured Riemann surfaces. First of all, for punctured Riemann
surfaces only non-negative weights are allowed because they are the
lengths of the horocycles at the punctures. This prevents our
secondary polyhedra from being bounded. While all vertices of a
secondary polytope of a point configuration correspond to
triangulations, the vertices of a secondary polyhedron of a punctured
Riemann surface correspond to coarsest Delaunay decompositions, which
are not necessarily ideal triangulations. Moreover, while a point configuration
in $\R^{d}$ determines a unique secondary polytope, our construction
associates a unique secondary polyhedron to each pair $(\Riem,x)$
consisting of a punctured Riemann surface $\Riem$ and a point
$x\in\Riem$ (cf.\ Section~\ref{sec:poly}).  Thus, our
construction yields a polyhedron bundle over the Riemann surface
$\Riem$. This is reminiscent of the notion of fiber
polytopes~\cite{BilleraSturmfels:1992}, but here the base space is a
punctured Riemann surface instead of a polytope.

This research was originally motivated by a recent variational method
to construct ideal hyperbolic polyhedra with prescribed intrinsic
metric, or equivalently, to compute discrete uniformizations of
piecewise Euclidean surfaces~\cite{springb_uniform}.
The complexity of this method depends on the complexity of the flip
algorithm for the Epstein--Penner convex hull construction~\cite{Weeks:1993,TillmannWong:2016}.
We expect that the correspondence between ideal Delaunay decompositions of a punctured Riemann surface and faces of a secondary polyhedron will shed further light on that method.

The paper is organized in a way to make it accessible for audiences from both geometry and combinatorics.
To describe the full setup thus requires an unusually long preparation. Experts in both fields might want to start with the new results in Section~\ref{sec:fan} right away.
Frequent references to the introductory sections are meant as an aid for picking up our notation.
In Section~\ref{sec:gkz} we begin with reviewing the classical GKZ construction of secondary polytopes of point configurations in $\R^{d}$.
This will subsequently allow to point out the similarities and the differences between the classical theory and our version for punctured Riemann surfaces.
In Section~\ref{sec:hyperbolic} we recall basic facts from hyperbolic geometry.
This is mainly to introduce our notation, but also for describing explicitly how to translate between various models of the hyperbolic plane.
This is important as the proofs of our main results require to switch freely between several models.
After a brief review of punctured Riemann surfaces and Penner's coordinates on decorated Teichm{\"u}ller spaces in Section~\ref{sec:riemann}, we finally define the secondary fan of a punctured Riemann surface in Section~\ref{sec:fan}.
Ideal cell decompositions of a Riemann surface with $n$ punctures correspond to secondary cones in $\R^{n}$, which form the secondary fan.
The construction of secondary polyhedra and our main result, Theorem~\ref{thm:normalfan}, are the topic of Section~\ref{sec:poly}.
Sections~\ref{sec:fan} and~\ref{sec:poly} are illustrated with many explicit examples.
The latter have been obtained via an implementation in \polymake \cite{polymake:2000}, and the method is briefly explained in Section~\ref{sec:computing}.
We close the paper with remarks on possible generalizations and open questions in Section~\ref{sec:concluding}.

For helpful discussions we are indebted to Stephan Tillmann.

\section{The classical GKZ construction}\label{sec:gkz} 
\noindent
In this section, we quickly recall the classical constructions of
secondary fans and secondary polytopes for point configurations in
$\R^{d}$.  The ideas were developed by Gel'fand, Kapranov and
Zelevinsky \cite{gkz}; see also \cite[Chap.~5]{bibel}. In
Sections~\ref{sec:fan} and~\ref{sec:poly} we will
describe analogous constructions for punctured Riemann surfaces
instead of point configurations in $\R^{d}$.

Let $A\subset\R^{d}$ be a non-empty finite subset with $n=|A|$
elements and let $Q=\conv(A)$ be its convex hull. For simplicity we
assume that $A$ is not contained in a proper affine subspace of
$\R^{d}$, so that $Q$ is a $d$-dimensional polytope and in particular
$n\geq d+1$. A \emph{polytopal subdivision} of $(Q,A)$ is a
polytopal complex~$S$ whose carrier is $Q$ and whose vertex set is a
subset of $A$. Note that it is not required that all points in $A$ are
vertices of $S$. Only the vertices of $Q$ necessarily occur as
vertices of $S$. A \emph{triangulation of $(Q,A)$} is a polytopal
subdivision of $(Q,A)$ whose elements are simplices.

For any assignment $\omega\in\R^{A}$ of real numbers to elements of
$A$, let $g_{\omega}$ be the function
\begin{equation}
  \label{eq:gomega}
  \begin{gathered}
    g_{\omega}:Q\longrightarrow\R\\
    g_{\omega}(x)=\min_{h}(h(x)),
  \end{gathered}
\end{equation}
where the minimum is taken over all affine functions
\begin{equation}
  \label{eq:haff}
  h:\R^{d}\longrightarrow\R,\quad
  h(x)=h_{0}+h_{1}x_{1}+\ldots+h_{d}x_{d}
\end{equation}
satisfying 
\begin{equation}
  \label{eq:haffcond}
  h(a)\geq\omega_{a}\quad\text{for all}\quad a\in A.
\end{equation}
The function $g_{\omega}$ is a piecewise linear concave function. Its
graph
\begin{equation*}
  G(\omega) \ = \
  \big\{~(x,y)\in\R^{d}\times\R~\big|~x\in Q,~y=g_{\omega}(x)~\big\}
\end{equation*}
is the upper boundary of the convex hull of the point set
$\{(a,\omega_{a})\}_{a\in A}$ in $\R^{d}\times\R$. 

Let $D(\omega)$ be the polytopal subdivision of $(Q,A)$ that
contains a polytope $P\subseteq Q$ if and only if there is an affine
function~\eqref{eq:haff} satisfying~\eqref{eq:haffcond} such that 
\begin{equation*}
  P \ = \ \big\{~x\in Q~\big|~g_{\omega}(x)=h(x)~\big\}.
\end{equation*}
Vertical projection $\R^{d}\times\R\rightarrow\R^{d}$ maps the faces
of $G(\omega)$ bijectively onto the cells of $D(\omega)$.
A polytopal subdivision $S$ of $(Q,A)$ is called \emph{regular} if
$S=D(\omega)$ for some $\omega\in\R^{A}$. 
The \emph{secondary cone} $C(S)\subseteq\R^{A}$ of a polytopal
subdivision $S$ of $(Q,A)$ is defined by
\begin{equation}
  \label{eq:GKZcone}
  \begin{split}
    C(S)
    \ := \ &\big\{~\omega\in\R^{A}~\big|~g_{\omega}
    \text{ is affine on all cells of }S~\big\}\\
    = \ &\big\{~\omega\in\R^{A}~\big|~S\preceq D(\omega)~\big\},
  \end{split}
\end{equation}
where we write $S_{1}\preceq S_{2}$ if $S_{1}$ refines $S_{2}$, i.e.,
if every cell of $S_{1}$ is contained in a some cell of
$S_{2}$. The secondary cones are indeed polyhedral cones. To see this,
we use the functions $g_{T,\omega}$ defined below to derive linear
equations and inequalities describing the secondary cones. We will
also use the functions $g_{T,\omega}$ to define the secondary
polytopes.

For any triangulation $T$ of $(Q,A)$ and any $\omega\in\R^{A}$, let
\begin{equation}
  \label{eq:gTomega}
  g_{T,\omega}: Q\longrightarrow \R
\end{equation}
be the linear interpolation of $\omega$ with respect to $T$, i.e.,
the unique piecewise linear function that is affine on each simplex of
$T$ and satisfies 
\begin{equation*}
  g_{T,\omega}(a) \ = \ \omega_{a}\quad\text{for all}\quad a\in A \enspace. 
\end{equation*}
Then $g_{T,\omega}=g_{\omega}$ if and only if $g_{T,\omega}$ is
concave, so
\begin{equation}
  \label{eq:GKZcone2}
  C(T) \ = \ \big\{~\omega\in\R^{A}~\big|~g_{T,\omega}\text{ is concave}~\big\}.
\end{equation}
On each $d$-simplex $\sigma=[a_{1},\ldots,a_{d+1}]\in T$, the function
$g_{T,\omega}$ coincides with the affine function
\begin{equation}
  \label{eq:gtomegasigma}
  g_{T,\omega}^{\sigma}(x) \ = \ \frac{1}{d!\vol(\sigma)}
  \det
  \begin{pmatrix}
    a_{1}&\ldots&a_{d+1}& -x\\
    1 &\ldots& 1    & -1\\
    \omega_{a_{1}}&\ldots&\omega_{a_{d+1}}&0
  \end{pmatrix} \enspace ,
\end{equation}
where $\vol(\sigma)$ denotes the oriented volume of an oriented
$d$-simplex in $\R^{d}$, i.e.,
\begin{equation*}
  \vol([x_{1},\ldots,x_{d+1}]) \ = \ \frac{1}{d!}\det
  \begin{pmatrix}
    x_{1}&\ldots&x_{d+1}\\
    1 &\ldots& 1
  \end{pmatrix} \enspace .
\end{equation*}
The function $g_{T,\omega}$ is concave if and only if for every pair
\begin{equation*}
  [a_{1},\ldots,a_{d+1}],[a_{2},\ldots,a_{d+2}]\in T
\end{equation*}
of $d$-simplices sharing a $(d-1)$-face $[a_{2},\dots,a_{d+1}]$ we have
\begin{equation}
  \label{eq:gconvcond}
  g^{[a_{1},\ldots,a_{d+1}]}_{T,\omega}(a_{d+2}) \ \geq \ \omega_{a_{d+2}} \enspace.
\end{equation}
If the simplex $[a_{1},\ldots,a_{d+1}]$ is positively oriented, i.e., $\vol([a_{1},\ldots,a_{d+1}])>0$, then inequality~\eqref{eq:gconvcond} is equivalent to
\begin{equation}
  \label{eq:GKZconeineq}
  \det
  \begin{pmatrix}
    a_{1}&\ldots&a_{d+2}\\
    1 &\ldots& 1\\
    \omega_{a_{1}}&\ldots&\omega_{a_{d+2}}
  \end{pmatrix}
  \ \leq \ 0 \enspace .
\end{equation}
So $\omega\in C(T)$ if and only if $\omega$ satisfies the
inequalities~\eqref{eq:GKZconeineq} for all pairs of $d$-simplices
sharing a $(d-1)$-face. More generally, one obtains the following
characterization for arbitrary subdivisions:

\begin{lemma}
  \label{lem:GKZcones}
  Let $S$ be a polytopal subdivision of $(Q,A)$ and let $T$ be a
  triangulation of $(Q,A)$ refining $S$. Then the following statements
  for $\omega\in\R^{A}$ are equivalent:
  \begin{compactenum}[(i)]
  \item $\omega\in C(S)$
  \item For any two $d$-simplices
    $ [a_{1},\ldots,a_{d+1}],[a_{2},\ldots,a_{d+2}]\in T $ sharing a
    $(d-1)$-face, where $[a_{1},\ldots,a_{d+1}]$ is positively
    oriented, $\omega$ satisfies inequality~\eqref{eq:GKZconeineq},
    and equality holds if both $d$-simplices of $T$ are contained in the
    same $d$-cell of~$S$.
  \end{compactenum}
\end{lemma}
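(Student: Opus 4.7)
The plan is to leverage the characterization of $C(T)$ already obtained in the discussion preceding the lemma, together with the fact that $T$ refines $S$. The key technical observation driving the proof is that inequality~\eqref{eq:GKZconeineq} across a $(d-1)$-face of $T$ encodes concavity of $g_{T,\omega}$ there, and becomes an \emph{equality} precisely when the two adjacent affine pieces of $g_{T,\omega}$ coincide as affine functions on $\R^{d}$. Indeed, equality in~\eqref{eq:gconvcond} means that $g^{[a_{1},\ldots,a_{d+1}]}_{T,\omega}$ takes the value $\omega_{a_{d+2}}$ at $a_{d+2}$, so it agrees with $g^{[a_{2},\ldots,a_{d+2}]}_{T,\omega}$ on $d+1$ affinely independent points.

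For the direction (i)$\Rightarrow$(ii), I would argue that $T\preceq S\preceq D(\omega)$, so $\omega\in C(T)$, which gives the inequalities~\eqref{eq:GKZconeineq} for free together with the identity $g_{\omega}=g_{T,\omega}$. If two adjacent $d$-simplices of $T$ lie in the same $d$-cell $P$ of $S$, then $g_{T,\omega}$ is affine on $P$ and the two corresponding affine pieces coincide, yielding equality in~\eqref{eq:GKZconeineq} by the key observation.

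For the converse (ii)$\Rightarrow$(i), the inequalities alone already force $g_{T,\omega}$ to be concave, so $\omega\in C(T)$ and $g_{\omega}=g_{T,\omega}$ by~\eqref{eq:GKZcone2}. To promote this to $\omega\in C(S)$, it suffices to show that $g_{T,\omega}$ is affine on every $d$-cell $P$ of $S$. Since $T$ restricts to a triangulation of the convex polytope $P$, any two of its $d$-simplices in $P$ can be joined by a chain of adjacent pairs sharing $(d-1)$-faces interior to $P$; by hypothesis equality holds across each such face, so the affine pieces of $g_{T,\omega}$ propagate unchanged along the chain, and $g_{T,\omega}$ is a single affine function on $P$. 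Hence $P$ lies in a cell of $D(\omega)$, and since this is true for every $d$-cell of $S$ we conclude $S\preceq D(\omega)$, i.e.\ $\omega\in C(S)$.

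The only delicate point is the propagation step in the converse direction, which relies on the codimension-one dual graph of the triangulation of each $d$-cell of $S$ being connected. This is a standard but non-trivial property of triangulations of convex polytopes, and is the place where the assumption that $T$ is a triangulation (rather than an arbitrary refinement) of $S$ is genuinely used; in the final write-up I would either give a short direct argument or cite the corresponding result from~\cite{bibel}.
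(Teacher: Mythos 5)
Your argument is correct. Note that the paper does not actually prove Lemma~\ref{lem:GKZcones}: it is stated without proof as part of the review of the classical GKZ theory, with the surrounding discussion (the equivalence of \eqref{eq:gconvcond} and \eqref{eq:GKZconeineq}, and the local concavity criterion for $g_{T,\omega}$) supplying exactly the ingredients you use. Your write-up is the standard argument one would find in \cite{bibel}: the direction (i)$\Rightarrow$(ii) is immediate from $T\preceq S\preceq D(\omega)$ and the rigidity of an affine function determined on $d+1$ affinely independent points, and the converse correctly reduces to propagating the affine piece across the dual graph of the induced triangulation of each $d$-cell of $S$. You are right to single out the connectedness of that dual graph through interior $(d-1)$-faces as the one nontrivial ingredient; a citation to \cite{bibel} there is appropriate, and with it the proof is complete.
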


In particular, Lemma~\ref{lem:GKZcones} implies that the secondary
cones are closed polyhedral cones. The following lemma is also not
difficult to see:

\begin{lemma}
  \label{lem:GKZtopdim}
  A secondary cone $C(S)$ has non-empty interior in $\R^{A}$ if and
  only if the polytopal subdivision $S$ of $(Q,A)$ is in fact a
  regular triangulation with vertex set equal to~$A$.
\end{lemma}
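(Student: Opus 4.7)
The plan is to invoke Lemma~\ref{lem:GKZcones} in both directions, since that lemma already encodes $C(S)$ as a system of explicit linear (in)equalities in $\omega$ coming from a refining triangulation. For the easy ``if'' direction, assume $S$ is a regular triangulation of $(Q,A)$ with $V(S)=A$, and pick $\omega_{0}\in\R^{A}$ realizing $D(\omega_{0})=S$. I apply Lemma~\ref{lem:GKZcones} with $T=S$: since $S$ is already a triangulation, no two distinct $d$-simplices of $T$ share a $d$-cell of $S$, so $C(S)$ is cut out purely by the inequalities~\eqref{eq:GKZconeineq}, with no forced equalities. At $\omega_{0}$ each such inequality is \emph{strict}, because otherwise two adjacent $d$-simplices would merge into a single cell of $D(\omega_{0})$, contradicting $D(\omega_{0})=S$. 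Strict linear inequalities carve out an open set, so a whole neighborhood of $\omega_{0}$ in $\R^{A}$ lies inside $C(S)$, giving $\omega_{0}\in\interior C(S)$.

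For the ``only if'' direction, suppose $\omega_{0}\in\interior C(S)$, and verify three points in turn. First, $S$ is a triangulation: if some $d$-cell $\sigma$ of $S$ were not a simplex, pick a refinement $T$ of $S$ with $V(T)=A$; then $\sigma$ is split into $\geq 2$ $d$-simplices of $T$ adjacent across a $(d-1)$-face, and Lemma~\ref{lem:GKZcones} forces the corresponding inequality~\eqref{eq:GKZconeineq} to hold with equality throughout $C(S)$. Since the coefficient of $\omega_{a_{d+2}}$ in that determinant is $\pm\,d!\,\vol([a_{1},\ldots,a_{d+1}])\neq 0$, the equation is genuinely non-trivial, placing $C(S)$ inside a hyperplane and contradicting non-empty interior. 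Second, $V(S)=A$: if some $a\in A\setminus V(S)$ existed, I would pick a refinement $T$ using $a$ as a vertex, locate an adjacent pair of $d$-simplices of $T$ sharing a $(d-1)$-face through $a$ and both contained in the same $d$-cell of $S$, and again obtain from Lemma~\ref{lem:GKZcones} an equation on $\omega$ with a non-zero coefficient at $\omega_{a}$, which confines $C(S)$ to a hyperplane. Third, $S$ is regular: interiority of $\omega_{0}$ forces every inequality of Lemma~\ref{lem:GKZcones} to be strict at $\omega_{0}$, so no adjacent $d$-simplices of $S$ merge in $D(\omega_{0})$; hence $D(\omega_{0})=S$ and $S$ is regular.

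The main obstacle I foresee is Step~2 of the converse, proving $V(S)=A$: one must choose the refining triangulation $T$ carefully so that the missing vertex $a\in A\setminus V(S)$ genuinely produces an adjacent simplex pair inside a single $d$-cell of $S$, and then verify that the resulting determinant equation is truly non-degenerate in the $\omega_{a}$ direction. This combinatorial/linear-algebra bookkeeping is what explains why each vertex of $A$ missing from $S$ costs one dimension from $C(S)$.
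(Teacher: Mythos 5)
The paper gives no proof of Lemma~\ref{lem:GKZtopdim} (it is dismissed as ``not difficult to see''), so there is nothing to compare against; judged on its own, your argument is correct and is the expected route via Lemma~\ref{lem:GKZcones}. Both directions check out: in the ``if'' direction, taking $T=S$ makes condition~(ii) a finite system of non-strict inequalities with no forced equalities, and strictness at a certifying $\omega_{0}$ (equality would merge two simplices into one cell of $D(\omega_{0})=S$) gives an open neighborhood inside $C(S)$; in the converse, each failure mode forces a nontrivial linear equation on all of $C(S)$.

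One remark on your Step~2, which you flag as the delicate point: the difficulty you anticipate is not actually there, because you do not need the forced equation to be non-degenerate \emph{in the $\omega_{a}$ direction}. It suffices that $C(S)$ lies in \emph{some} hyperplane, and every equation of type~\eqref{eq:GKZconeineq} is automatically nontrivial since the coefficient of the apex variable $\omega_{a_{d+2}}$ is $\pm\, d!\,\vol([a_{1},\ldots,a_{d+1}])\neq 0$ --- exactly the computation you already use in Step~1. So all you need in Step~2 is the purely combinatorial fact that if $a\in A\setminus V(S)$ and $T$ is a refinement of $S$ with $a\in V(T)$, then the cell $\sigma$ of $S$ whose relative interior contains $a$ is split by $T$ into at least two $d$-simplices (a single simplex would force $a$ to be a vertex of $\sigma$), and strong connectedness of the triangulation of $\sigma$ then yields an adjacent pair inside $\sigma$; the pair need not contain $a$ at all. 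With that simplification Steps~1 and~2 become the same argument, and the proof is complete.
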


We finally arrive at the first fundamental result of the classical
GKZ-theory:
\begin{thmdef}[secondary fan]
  \label{thm:secfan}
  The collection of secondary cones of regular subdivisions,
  \begin{equation}
    \label{eq:GKZsecfan}
    \secfan(A)=
    \big\{
    ~C(D)~|~
    D\text{ is a regular subdivision of (Q,A) }
    \big\}
  \end{equation}
  is a polyhedral fan with support $\R^{A}$, called the
  \emph{secondary fan} of the point configuration
  $A\subseteq\R^{d}$. More specifically, the following holds for all
  $\omega\in\R^{A}$ and all regular subdivisions $D_{1}$ and $D_{2}$
  of $(Q,A)$:
  \begin{compactenum}
  \item Every $\omega\in\R^{A}$ is contained in
    $C(D(\omega))\in\secfan(A)$. 
  \item $D_{1}\preceq D_{2}$ if and only if $C(D_{2})$ is a face of
    $C(D_{1})$. In particular, $C(D_{1})=C(D_{2})$ implies $D_{1}=D_{2}$.
  \item There is a uniquely determined finest common coarsening
    $D_{3}$ of $D_{1}$ and $D_{2}$ among all regular subdivisions, and
    $C(D_{1})\cap C(D_{2})=C(D_{3})$.
  \end{compactenum}
  Moreover, 
  \begin{compactitem}[(iv)]
  \item the top-dimensional cones in the secondary fan $\secfan(A)$
    are precisely the secondary cones of regular triangulations with
    vertex set $A$.
  \end{compactitem}
\end{thmdef}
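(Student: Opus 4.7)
The plan is to derive all four claims from the two equivalent descriptions of $C(S)$ in~\eqref{eq:GKZcone} together with the explicit inequality description in Lemma~\ref{lem:GKZcones}, verifying them in the order (i), (ii), (iii), (iv).

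Claim~(i) is immediate: for any $\omega\in\R^A$ the subdivision $D(\omega)$ is regular by definition, and $D(\omega)\preceq D(\omega)$ gives $\omega\in C(D(\omega))$. For the forward direction of~(ii), suppose $D_1\preceq D_2$; if $\omega\in C(D_2)$, then $D_1\preceq D_2\preceq D(\omega)$, so $\omega\in C(D_1)$. To upgrade this containment to a face inclusion, I would pick a triangulation $T$ of $(Q,A)$ refining $D_2$ (and hence also $D_1$). By Lemma~\ref{lem:GKZcones}, each of $C(D_1)$ and $C(D_2)$ is cut out of $\R^A$ by the same system of inequalities~\eqref{eq:GKZconeineq} indexed by pairs of adjacent $d$-simplices of $T$, together with equalities for those pairs sharing a common cell of $D_1$, respectively $D_2$. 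Since every pair lying in a common cell of $D_1$ also sits in a common cell of $D_2$, the cone $C(D_2)$ arises from $C(D_1)$ by promoting further inequalities to equalities, which exhibits $C(D_2)$ as a face of $C(D_1)$.

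The workhorse for the converse in~(ii), for uniqueness, and for~(iii) is an intermediate claim I would prove first: for any regular subdivision $D$, the relative interior of $C(D)$ equals $\{\omega\in\R^A : D(\omega)=D\}$. Unpacking Lemma~\ref{lem:GKZcones} once more, strict inequality in~\eqref{eq:GKZconeineq} for a pair of adjacent $d$-simplices of a refining triangulation encodes precisely the geometric condition that those two simplices lie in different cells of $D(\omega)$; hence the relative interior of $C(D)$ is exactly the locus where $D(\omega)=D$. Granted this, if $C(D_2)$ is a face of $C(D_1)$, choose $\omega$ in the relative interior of $C(D_2)$: then $D(\omega)=D_2$, while $\omega\in C(D_1)$ forces $D_1\preceq D_2$. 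The uniqueness assertion $C(D_1)=C(D_2)\Rightarrow D_1=D_2$ follows at once, since each is then a face of the other.

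For~(iii), I would take $\omega$ in the relative interior of the polyhedral cone $C(D_1)\cap C(D_2)$ and set $D_3:=D(\omega)$, which is automatically regular. From $\omega\in C(D_i)$ we obtain $D_i\preceq D_3$, and hence $C(D_3)\subseteq C(D_1)\cap C(D_2)$ by~(ii). For the reverse inclusion, the relative-interior characterization places $\omega$ in the relative interior of $C(D_3)$; since $C(D_3)$ is already a face of each of $C(D_1)$ and $C(D_2)$ and contains an interior point of their intersection, a short face-containment argument forces $C(D_3)=C(D_1)\cap C(D_2)$. Minimality is then automatic, as any other regular common coarsening $D'$ satisfies $C(D')\subseteq C(D_1)\cap C(D_2)=C(D_3)$, giving $D_3\preceq D'$ via~(ii). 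Claim~(iv) is essentially Lemma~\ref{lem:GKZtopdim} verbatim, and (i)--(iii) combined with (iv) assemble into the fan property: the cones cover $\R^A$ by~(i), any two meet in a common face by~(iii), and every face of such a cone is again one of the cones by~(ii). The principal technical obstacle is the relative-interior characterization; once that is in hand, everything else reduces to bookkeeping with refinements and face lattices.
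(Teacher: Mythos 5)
The paper does not actually prove this statement: Section~\ref{sec:gkz} is expository, and Theorem~\ref{thm:secfan} is quoted from the classical literature (Gel'fand--Kapranov--Zelevinsky, De Loera--Rambau--Santos), so there is no in-paper argument to compare against. Judged on its own terms, your proof follows the standard route and is essentially sound: the pivot is the characterization $\operatorname{relint} C(D)=\{\omega : D(\omega)=D\}$, which you justify correctly (strictness of \eqref{eq:GKZconeineq} across a shared facet is exactly the condition that the two simplices lie in different cells of $D(\omega)$, and regularity of $D$ supplies a witness point showing that none of these inequalities is an implicit equality), and the remaining claims are face-lattice bookkeeping. Two small repairs are needed. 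First, in the forward direction of (ii) you choose $T$ ``refining $D_2$ (and hence also $D_1$)''; since $D_1\preceq D_2$ means $D_1$ is the \emph{finer} subdivision, a triangulation refining $D_2$ need not refine $D_1$ --- you must choose $T$ refining $D_1$, which then refines $D_2$ by transitivity (the rest of that paragraph is already written as if you had done so). Second, the fan property requires that \emph{every} nonempty face of a secondary cone is again a secondary cone, which does not follow from (ii) alone, since (ii) only identifies which secondary cones occur as faces; the missing half is again supplied by the relative-interior characterization: given a nonempty face $F$ of $C(D_1)$, pick $\omega\in\operatorname{relint}F$ and observe that $C(D(\omega))$ is a face of $C(D_1)$ whose relative interior also contains $\omega$, whence $F=C(D(\omega))$. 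In (iii) you should also record why $C(D_1)\cap C(D_2)$ is nonempty (it contains the lineality space $\mathcal{L}_{A}$ of~\eqref{eq:lineality}, and in particular the origin), so that the relative-interior point you start from exists.
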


\begin{remark}
  The analogous statements to Lemma~\ref{lem:GKZtopdim} and hence
  Theorem~\ref{thm:secfan}~(iv) do not hold in the setting of
  punctured Riemann surfaces (cf.\ Section~\ref{sec:fan}). A
  top-dimensional cone in the secondary fan of a punctured Riemann
  surface may correspond to a Delaunay decomposition that is not an
  ideal triangulation. In other words, not all finest Delaunay
  decompositions are ideal triangulations.
\end{remark}

For every triangulation $T$ of $(Q,A)$ 
let the function $L_{T}:\R^{A}\rightarrow\R$ be defined by
\begin{equation}
  \label{eq:LTomega}
  \begin{split}
    L_{T}(\omega) \ &= \ \int_{Q}g_{T,\omega}(x)\,dx\\
    &= \ \frac{1}{d+1}
    \sum_{[a_{1},\ldots,a_{d+1}]\in T}
    \vol([a_{1},\ldots,a_{d+1}])(\omega_{a_{1}}+\ldots+\omega_{a_{d+1}}) \enspace,
  \end{split}
\end{equation}
where all simplices are positively oriented.
Since $L_{T}$ is a linear function for every triangulation $T$, we can
interpet $L$ as the function 
\begin{equation*}
    L:\Triang(A)\longrightarrow (\R^{A})^{*},\quad
    T\longmapsto L_{T} \enspace,
\end{equation*}
where $\Triang(A)$ is the set of triangulations of $(Q,A)$
and~$(\R^{A})^{*}$ is the dual vector space of $\R^{A}$.  The
coordinate vector of $(d+1)L_{T}$ with respect to the canonical basis
of $(\R^{A})^{*}$,
\begin{equation}
  \label{eq:ella}
  (\ell_{a}(T))_{a\in A}\in\R^{A},
  \quad
  \ell_{a}(T)\ =\sum_{\sigma\in T:a\in\sigma^{}}
  \vol(\sigma)\enspace,
\end{equation}
is  called the \emph{GKZ-vector} of the triangulation
$T$ and often identified with a vector in $\R^{n}$ via a numbering of the
elements of $A$.

\begin{definition}(secondary polytope)
  The \emph{secondary polytope} $\secpoly(A)$ is the convex hull of
  the linear functionals $L_{T}$, i.e.,
  \begin{equation}
    \label{eq:GKZsecpoly}
    \secpoly(A) \ = \ \conv\big(\{L_{T}\}_{T\in\Triang(A)}\big)
    \subseteq(\R^{A})^{*} \enspace.
  \end{equation}
\end{definition}

For a (bounded or unbounded) polytope $P\subseteq V$ in a finite
dimensional real vector space $V$, and a face $F$ of $P$, the
\emph{normal cone} $N_{P}(F)$ is the cone of all functionals in the
dual vector space $V^{*}$ that attain their maximal value in $P$ at
all points of $F$. The dimensions are complementary:
\begin{equation*}
  \dim F+\dim C_{N}(F) \ = \ \dim V \enspace.
\end{equation*}
The \emph{normal fan} of $P$ is the fan containing the normal cones of
all faces $P$. Note that the normal fan is complete, i.e.,
$\bigcup_{F}N_{P}(F)=V$, if and only if the polytope $P$ is bounded.

The following theorem is the second fundamental result of the
classical GKZ-theory.
\begin{theorem}
  The normal fan of the secondary polytope $\secpoly(A)$ is the
  secondary fan $\secfan(A)$.
\end{theorem}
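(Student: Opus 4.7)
The plan is to show that for each $\omega \in \R^A$, viewed as a linear functional on $(\R^A)^*$, the face of $\secpoly(A)$ on which $\omega$ attains its maximum has normal cone equal to $C(D(\omega))$. Since every face of $\secpoly(A)$ arises this way, and the cones $C(D(\omega))$ for $\omega \in \R^A$ are exactly the secondary cones forming $\secfan(A)$, this will identify the two fans.

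The central inequality is
\[
L_T(\omega) \ \leq \ M(\omega) \ := \ \int_Q g_\omega(x)\,dx
\]
for every triangulation $T$ of $(Q,A)$ and every $\omega \in \R^A$. I would first establish the pointwise bound $g_{T,\omega} \leq g_\omega$ on $Q$: the concave function $g_\omega$ satisfies $g_\omega(a) \geq \omega_a = g_{T,\omega}(a)$ for every $a \in A$, so on each simplex $\sigma \in T$, $g_\omega$ dominates the affine interpolation $g_{T,\omega}|_\sigma$ by concavity. Integrating yields the inequality, and equality forces $g_{T,\omega} \equiv g_\omega$; since $g_{T,\omega}$ agrees with $\omega$ on $A$, this is equivalent to $g_{T,\omega}$ being concave, i.e., to $\omega \in C(T)$ by~(\ref{eq:GKZcone2}). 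Consequently, the face
\[
F_\omega \ = \ \conv\big\{\,L_T \ : \ \omega \in C(T)\,\big\}
\]
is precisely the locus where $\omega$ is maximized on $\secpoly(A)$.

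Next, I would show $N(F_\omega) = C(D(\omega))$. By definition $\omega' \in N(F_\omega)$ iff every vertex of $F_\omega$ is also a maximizer for $\omega'$, iff $\omega' \in C(T)$ for every triangulation $T$ with $\omega \in C(T)$. Translating via $C(T) = \{\eta : T \preceq D(\eta)\}$, the condition becomes: every triangulation refining $D(\omega)$ also refines $D(\omega')$. One direction is immediate: if $D(\omega) \preceq D(\omega')$ (equivalently $\omega' \in C(D(\omega))$), transitivity gives the conclusion. For the converse, by Theorem~\ref{thm:secfan}(iii) the intersection $\bigcap_{T \preceq D(\omega)} C(T)$ equals $C(D_0)$, where $D_0$ is the finest common coarsening (among regular subdivisions) of all triangulations refining $D(\omega)$. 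The key combinatorial fact is that $D_0 = D(\omega)$, since inside any cell $P$ of $D(\omega)$ every triangulation of $(P, A \cap P)$ would have to refine $D_0|_P$, forcing $D_0|_P = \{P\}$.

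Combining these steps, since every face of the bounded polytope $\secpoly(A)$ is $F_\omega$ for some $\omega$ and every regular subdivision $D$ equals $D(\omega)$ for some $\omega$, the normal fan of $\secpoly(A)$ coincides with $\secfan(A)$. The main obstacle is the combinatorial claim $D_0 = D(\omega)$; once it is in hand, the rest assembles from the key inequality, the characterization~(\ref{eq:GKZcone2}), and the lattice structure of $\secfan(A)$ already established in Theorem~\ref{thm:secfan}.
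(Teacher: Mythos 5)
Your proposal is correct and follows essentially the same route as the paper: the paper's entire proof is the observation that $T$ maximizes $L_T(\omega)=\int_Q g_{T,\omega}$ exactly when $g_{T,\omega}$ is concave, i.e.\ when $\omega\in C(T)$, which is precisely your central inequality $L_T(\omega)\le\int_Q g_\omega\,dx$ with equality iff $\omega\in C(T)$. The remaining steps you spell out (identifying $N(F_\omega)$ with $C(D(\omega))$ via the lattice structure of Theorem~\ref{thm:secfan}) are what the paper compresses into ``follows directly from the definitions.''
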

This follows directly from the definitions and the fact that a
triangulation $T$ maximizes the integral in~\eqref{eq:LTomega} if and
only if $g_{T,\omega}$ is concave.

It should be mentioned that the dimension of the secondary polytope of
$A$ is strictly less than $n=\dim(\R^{A})^{*}$, corresponding to the
fact that all cones of the secondary fan contain a nontrivial subspace
of $\R^{A}$. Indeed, if $\omega\in\R^{A}$ is the restriction $h|_{A}$
of an affine function~\eqref{eq:haff} to $A$, then
$g_{\omega}=h|_{Q}$, and therefore $D(\omega)$ is the face lattice of
$Q$ itself, and $\omega\in C(S)$ for every subdivision $S$
of $(Q,A)$. Thus, all secondary cones contain the $(d+1)$-dimensional
linear subspace 
\begin{equation}
  \label{eq:lineality}
  \mathcal{L}_{A}\ =\ \big\{~\omega\in\R^{A}~\big|~
  \omega\ =\ h|_{A}~\text{for some affine function}~h:\R^{d}\rightarrow\R
  \big\}\subseteq\R^{A}\enspace.
\end{equation}
Also, if $\omega=h|_{A}$ then for any triangulation $T$,
$L_{T}(\omega)=h(b)$, where $b$ is the barycenter of $Q$. Therefore
the secondary polytope is contained in an affine subspace
of~$(\R^{A})^{*}$ that is spanned by the $(n-d-1)$-dimensional
annihilator~$\mathcal{L}_{A}^{\perp}$.
In contrast, our secondary polyhedra of punctured Riemann surfaces will turn out to be full-dimensional.

\section{The Hyperbolic Plane}\label{sec:hyperbolic}
\noindent We continue with a brief discussion of hyperbolic geometry in order to introduce our notation and terminology.
For further reading we suggest \cite{Flavors}, \cite{Thurston} or \cite{Katok}.
\emph{Minkowski $3$-space}, denoted as $\R^{2,1}$, is the
three-dimensional real vector space together with the indefinite
scalar product
\[
  \scalp{~(x_1,x_2,x_3)~,~(x_1^\prime,x_2^\prime,x_3^\prime)~} \ = \  x_1x_1^\prime + x_2x_2^\prime - x_3x_3^\prime \enspace.
\]
The set of points in $\R^{2,1}$ with $\scalp{x,x}=-1$ is the standard hyperboloid of two sheets.
Its upper sheet
\[ \HH \ = \ \{~ x ~\in~ \R^{2,1} ~|~ \scalp{x,x}=-1,~x_3>0 ~\} \]
is equipped with the Riemannian metric induced by restricting the
Minkowski scalar product to tangent hyperplanes.
This gives rise to the \emph{hyperboloid model} of the hyperbolic plane.
Its points are the points in $\HH$, and geodesics (or \emph{[hyperbolic]
  lines}) are the intersections of $\HH$ with planes through the origin.
The group $\text{SO}^+(2,1)$ of linear transformations with
determinant $1$ that preserve the Minkowski scalar product and map the
upper sheet of the hyperboloid to itself acts on the hyperbolic plane
$\HH$ as the group of orientation preserving isometries.

A ray in the \emph{positive light cone}  
\[ \Lplus \ = \ \{~ v ~\in ~ \R^{2,1} ~|~ \scalp{v,v}=0,~ v_3>0 ~\} \]
is called an \emph{ideal point} of $\HH$.
The set of ideal points $\mathbb{S}^1_\infty=\Lplus/\R_{>0}$ is called the \emph{ideal boundary} of $\HH$.
Two distinct ideal points span a hyperplane through the origin.
The intersection of that hyperplane with $\HH$ yields the geodesic connecting the two ideal points.
Furthermore, each point $v \in \Lplus$ defines the \emph{horocycle} 
\[ h(v) \ = \  \{~ x \in \HH ~|~ \scalp{x,v}=-1 ~\} \enspace, \]
centered at the ideal point $\R_{>0}v$.  
Horocycles are limiting cases of circles in the hyperbolic plane as their radii tend to infinity and their centers tend to an ideal point.

Other models of the hyperbolic plane arise via projections. 
These include the ones below, all of which are rotationally symmetric with respect to the vertical axis; see Figure~\ref{fig:hyperbolic_models} for a sketch.
In each case the hyperbolic metric is carried over by the respective projection.
\begin{enumerate}[label = (\roman*)]
\item The \emph{Beltrami--Klein model} (or \emph{projective model})
  $\Klein$ is obtained by projecting $\HH$ from the origin onto the open unit disk at height $1$. 
The ideal boundary is projected to the topological boundary of this disk. 
Geodesics are Euclidean line segments within $\Klein$.	

\item The \emph{hemisphere model} $\hemi$ is obtained from $\HH$ by stereographically projecting through the point $(0,0,-1)$ onto the northern hemisphere of the unit sphere. 
Its equator forms the ideal boundary and geodesics are intersections of the hemisphere with hyperplanes orthogonal to the $(x_1,x_2)$-plane.

\noindent%
Projection along vertical lines maps directly from the Beltrami--Klein model to
the hemisphere model and vice versa. That is, a point $(x_{1},x_{2})$
in the Beltrami--Klein model corresponds to the point
$(x_{1},x_{2},\sqrt{1-x_{1}^{2}-x_{2}^{2}})$ in the hemisphere model.

\item The \emph{half-plane model} $\HS$ is obtained from $\hemi$ by stereographic projection through the point $(0,-1,0)$ onto the upper half-plane $\HS = \{ x \in \R^3 \,|\, x_2=0,\,x_3>0 \}$, which is identified with the upper half-plane of the complex plane $\mathbb{C}$. 
In this model, the ideal boundary is $\R \cup \{\infty\}$, geodesics
are half circles orthogonal to the real axis or Euclidean vertical
lines, and horocycles appear as Euclidean circles tangent to the real line $\R$ or as horizontal lines.
The group of orientation preserving isometries becomes the group $\PSL2R=\text{SL}_2(\R)/{\pm \text{Id}}$ of fractional linear transformations.
\end{enumerate}

While we mostly work with the hyperboloid and half-plane model, the
hemisphere model and its \enquote{light cylinder}~\eqref{eq:Lplushemi}
play an important role in the construction of secondary polyhedra in Section~\ref{sec:poly}.
The stereographic projection mapping the hyperboloid $\HH$ to the
hemisphere $\hemi$ (cf.\ Figure~\ref{fig:hyperbolic_models}) is the restriction of a projective
transformation to $\R^{3}$.
In affine coordinates it is given by
\begin{equation}\label{eq:trafo}
 (x_1,x_2,x_3) ~ \longmapsto ~ \frac{1}{x_3}(x_1,x_2,1)\enspace.
\end{equation}
From the standpoint of projective geometry, the hyperboloid model
$\HH$ and the hemisphere model $N$ are therefore just different affine
views of the same projective model. The
transformation~\eqref{eq:trafo} maps the positive light cone $\Lplus$
to the cylinder
\begin{equation}
  \label{eq:Lplushemi}
  \Lplus_\hemi \ = \ \{~x\in\R^{3}~|~x_{1}^{2}+x_{2}^{2}=1,~x_{3}>0~\}\enspace.
\end{equation}
Points in $\Lplus_\hemi$ represent horocycles in the hemisphere
model. Explicitly, $w \in \Lplus_\hemi$ corresponds to the 
horocycle
\begin{equation*}
h_\hemi(w) \ = \  \{~ x \in \hemi ~|~ x_1w_1+x_2w_2+x_3w_3=1 ~\}\enspace.
\end{equation*}

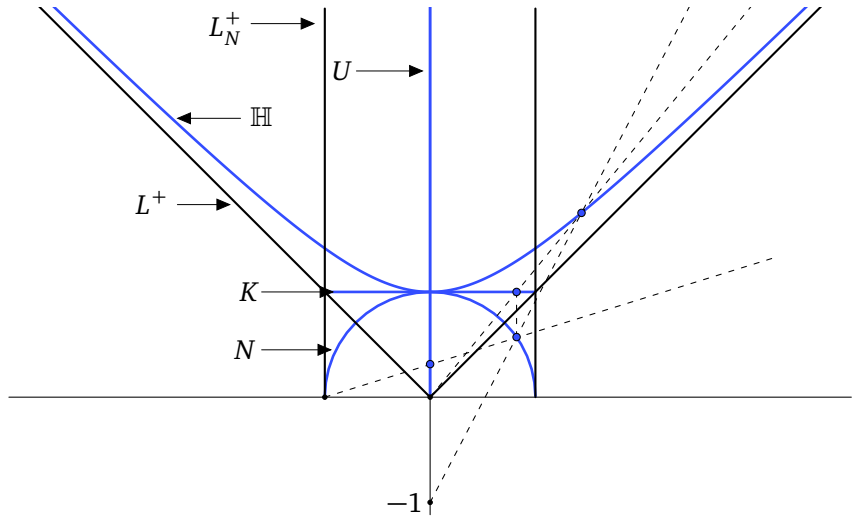
\begin{figure}[bth]
  \begin{tikzpicture}[scale=1.4,line cap=round,line join=round,>=triangle 45,x=1.0cm,y=1.0cm]
\clip(-4,-1.2) rectangle (4,3.7);
\draw (-4,0) -- (4,0);
\draw (0.,-1.1171008220908596) -- (0.,3.693037482659128);
\draw[line width=1pt,color=blue2,smooth,samples=100,domain=-4.0790145977108345:4.142740915136532] plot(\x,{sqrt((\x)^(2.0)+1.0)});
\draw [shift={(0.,0.)},line width=1pt,color=blue2]  plot[domain=0.01:3.131592653589793,variable=\t]({1.*1.*cos(\t r)+0.*1.*sin(\t r)},{0.*1.*cos(\t r)+1.*1.*sin(\t r)});
\draw [line width=1pt,color=blue2] (-1.,1.)-- (1.,1.);
\draw [line width=1.1pt,color=blue2] (0,0.01)-- (0,5);
\draw [line width=0.8pt,domain=0.0:4.142740915136532] plot(\x,{(-0.--3.*\x)/3.});
\draw [line width=0.8pt,domain=-4.0790145977108345:0.0] plot(\x,{(-0.--3.*\x)/-3.});
\draw [line width=0.8pt] (-1.,0.) -- (-1.,3.693037482659128);
\draw [line width=0.8pt] (1.,0.) -- (1.,3.693037482659128);
\draw [->,line width=0.1pt] (-1.8,2.65) -- (-2.4,2.65);
\draw [->,line width=0.1pt] (-1.6,1) -- (-0.92,1);
\draw [->,line width=0.1pt] (-2.4,1.83) -- (-1.9,1.83);
\draw [->,line width=0.1pt] (-1.6,0.45) -- (-0.94,0.45);
\draw [->,line width=0.1pt] (-1.7,3.55) -- (-1.1,3.55);
\draw [->,line width=0.1pt] (-0.7,3.1) -- (-.07,3.1);
\draw [dash pattern=on 2pt off 3pt] (-1,0) -- (4*0.8209228226113612,1.33);
\draw [dash pattern=on 2pt off 3pt,domain=0.0:4.142740915136532] plot(\x,{(-1.437594623649301--2.7511933936448525*\x)/1.437594623649301});
\draw [dash pattern=on 2pt off 3pt,domain=0.0:4.142740915136532] plot(\x,{(-0.--1.7511933936448525*\x)/1.437594623649301});
\draw [dash pattern=on 2pt off 3pt] (0.8209228226113612,1.)-- (0.8209228226113612,0.5710391574277508);
\draw (-2.2,3.75) node[anchor=north west] {$\Lplus_\hemi$};
\draw (-1.8,2.66) node[anchor=west] {$\HH$};
\draw (-2.9,2.05) node[anchor=north west] {$\Lplus$};
\draw (-1.9,1) node[anchor=west] {$K$};
\draw (-1.75,0.63) node[anchor=north] {$\hemi$};
\draw (-0.5,-1) node[anchor=west] {$-1$};
\draw (-1.03,3.1) node[anchor=west] {$\HS$};
\draw [fill=blue2] (1.437594623649301,1.7511933936448525) circle (1.0pt);
\draw [fill=black] (0.,-1.) circle (0.6pt);
\draw [fill=black] (0,0) circle (0.6pt);
\draw [fill=black] (-1,0) circle (0.6pt);
\draw [fill=blue2] (0,0.314) circle (1.0pt);
\draw [fill=blue2] (0.8209228226113612,1.) circle (1.0pt);
\draw [fill=blue2] (0.8209228226113612,0.5710391574277508) circle (1.0pt);
\end{tikzpicture}
  \caption{Projections between the hyperboloid ($\HH$), Beltrami--Klein ($\Klein$), hemisphere ($\hemi$) and half-plane ($\HS$) models of the hyperbolic plane.}
  \label{fig:hyperbolic_models}
\end{figure}

Consider two horocycles $h_1, h_2$ centered at two distinct ideal points.
The \emph{signed hyperbolic distance} $\ell$ between $h_1$ and $h_2$ is measured along the geodesic connecting their centers, and the sign is taken negative if and only if the horocycles intersect, see Figure~\ref{fig:signed_distance}.
Following Penner \cite[Chapter~1, \S4.1]{pennerbook}, we define the \emph{$\lambda$-length} of $h_1, h_2$ to be
\[ \lambda(h_1,h_2) \ = \ e^{\ell/ 2} \enspace .\] 
If the two horocycles are given as $h_1=h(v_1)$ and $h_2=h(v_2)$ for
two light cone vectors $v_1,v_2 \in \Lplus$, the former definition yields
\begin{equation}\label{eq:lambda_lightcone}
	\lambda(h_1,h_2) \ = \ \sqrt{- \tfrac{1}{2}\scalp{v_1,v_2}} \enspace .
\end{equation}

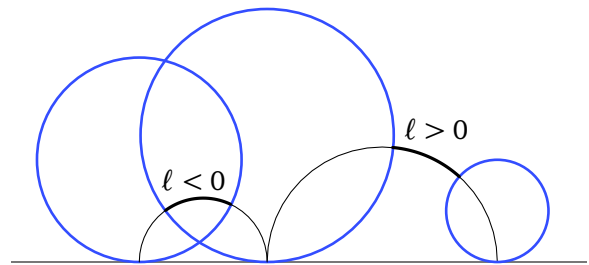
\begin{figure}[bth]
	\centering
\begin{tikzpicture}[scale=1.7,line cap=round,line join=round,>=triangle 45,x=1.0cm,y=1.0cm]
\clip(-2,-0.3) rectangle (2.5,2.2);
\draw (-4,0.)-- (6.2,0.);
\draw [shift={(0.9,0.)}] plot[domain=0.:3.141592653589793,variable=\t]({1.*0.9*cos(\t r)+0.*0.9*sin(\t r)},{0.*0.9*cos(\t r)+1.*0.9*sin(\t r)});
\draw [color=blue2,line width=1pt] (0.,0.99) circle (0.99cm);
\draw [color=blue2,line width=1pt] (1.8,0.4) circle (0.4cm);
\draw [color=blue2,line width=1pt] (-1,0.8) circle (0.8cm);
\draw [shift={(0.9,0.)},line width=1.2pt]  plot[domain=0.8364486591584586:1.48,variable=\t]({1.*0.9*cos(\t r)+0.*0.9*sin(\t r)},{0.*0.9*cos(\t r)+1.*0.9*sin(\t r)});
\draw [shift={(-0.5,0.)}] plot[domain=0.:3.141592653589793,variable=\t]({1.*0.5*cos(\t r)+0.*0.5*sin(\t r)},{0.*0.5*cos(\t r)+1.*0.5*sin(\t r)});
\draw [shift={(-0.5,0.)},line width=1.2pt] plot[domain=1.13:2.2,variable=\t]({1.*0.5*cos(\t r)+0.*0.5*sin(\t r)},{0.*0.5*cos(\t r)+1.*0.5*sin(\t r)});
\draw (1,1.2) node[anchor=north west] {$\ell>0$};
\draw (-.9,.8) node[anchor=north west] {$\ell<0$};

\end{tikzpicture}
	\caption{Signed distances of horocycles and $\lambda$-lengths.}
	\label{fig:signed_distance}
\end{figure}
 
An \emph{ideal triangle} $\itr$ is the closed region in the hyperbolic plane that is bounded
by three geodesics $a,b,c$ (the \emph{sides}) connecting three ideal points (the \emph{vertices}).
A \emph{decoration} of an ideal triangle is a triple of horocycles centered at the vertices.
Such a decoration gives rise to three $\lambda$-lengths $\lambda(a), \lambda(b)$ and $\lambda(c)$, one along each edge, see Figure~\ref{fig:dec_tri}.
The hyperbolic length of the horocyclic arc within the triangle at a
vertex is called the \emph{$h$-length} at the vertex.
In the ``trigonometry'' of decorated ideal triangles, $\lambda$-lengths and
$h$-lengths play a role similar to the side lengths and angles of
ordinary trigonometry.

Later we will consider ideal triangulations of punctured Riemann
surfaces, and then some of the vertices of a triangle may correspond to the
same ideal point of the surface.
For this reason, we will need more sophisticated notation than just
labelling the $h$-lengths by an incident triangle-vertex pair.
The standard orientation of the hyperbolic plane induces a cyclic order of the
sides of a triangle.
If the sides are $a,b,c$ in this cyclic order,
we label the three $h$-lengths of $\itr$ by $\hl^\itr_{ab},
\hl^\itr_{bc}$ and $\hl^\itr_{ca}$, see Figure~\ref{fig:dec_tri}.
The $\lambda$- and the $h$-lengths of a decorated ideal triangle are related via
\begin{equation}\label{eq:h-lengths}
\hl^\itr_{ab} \ = \ \frac{\lambda_{c}}{\lambda_{a} \lambda_{b}} \enspace ,
\end{equation}
see \cite[Chapter~1, \S4.2]{pennerbook}.

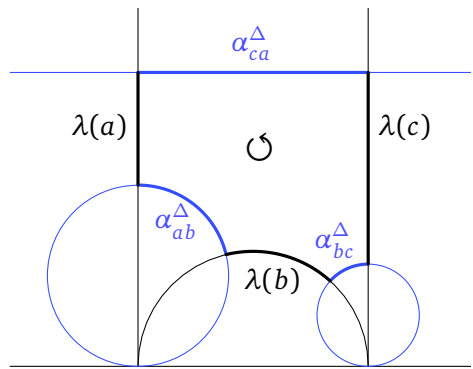
\begin{figure}[bth]
  \begin{tikzpicture}[scale=1.7,line cap=round,line join=round,>=triangle 45,x=1.0cm,y=1.0cm]
\clip(-1.3,-0.3) rectangle (2.6,2.8);
\draw (-1,0.)-- (6.2,0.);
\draw (0.,0.)-- (0.,3.6);
\draw (1.8,0) -- (1.8,4);
\draw [shift={(0.9,0.)}] plot[domain=0.:3.141592653589793,variable=\t]({1.*0.9*cos(\t r)+0.*0.9*sin(\t r)},{0.*0.9*cos(\t r)+1.*0.9*sin(\t r)});
\draw [color=blue2] (0.,0.7085636479715056) circle (0.7085636479715056cm);
\draw [color=blue2] (1.8,0.4) circle (0.4cm);
\draw [color=blue2] (-1,2.3)-- (6.2,2.3);
\draw [line width=1.2pt,color=blue2] (0,2.3)-- (1.8,2.3);
\draw [shift={(1.8,0.4)},line width=1.2pt,color=blue2]  plot[domain=1.6:2.38,variable=\t]({1.*0.4*cos(\t r)+0.*0.4*sin(\t r)},{0.*0.4*cos(\t r)+1.*0.4*sin(\t r)});
\draw [shift={(0.,0.71)},line width=1.2pt,color=blue2]  plot[domain=0.24:1.55,variable=\t]({1.*0.7085636479715057*cos(\t r)+0.*0.7085636479715057*sin(\t r)},{0.*0.7085636479715057*cos(\t r)+1.*0.7085636479715057*sin(\t r)});
\draw [line width=1.2pt] (0.,2.3)-- (0.,1.4171272959430112);
\draw [line width=1.2pt] (1.8,2.3)-- (1.8,0.8);
\draw [shift={(0.9,0.)},line width=1.2pt]  plot[domain=0.8364486591584586:1.8077035198040758,variable=\t]({1.*0.9*cos(\t r)+0.*0.9*sin(\t r)},{0.*0.9*cos(\t r)+1.*0.9*sin(\t r)});

\draw (0.75,0.85) node[anchor=north west] {$\lambda(b)$};
\draw (-0.6,2.05) node[anchor=north west] {$\lambda(a)$};
\draw (1.8,2.05) node[anchor=north west] {$\lambda(c)$};
\node at (0.95,1.7) {\Large $\circlearrowleft$};
\draw [color=blue2text](0.05,1.35) node[anchor=north west] {$\hl^\itr_{ab}$};
\draw [color=blue2text](1.3,1.2) node[anchor=north west] {$\hl^\itr_{bc}$};
\draw [color=blue2text](0.65,2.74) node[anchor=north west] {$\hl^\itr_{ca}$};

\end{tikzpicture}
  \caption{Decorated (oriented) ideal triangle $\Delta=(a,b,c)$ in the upper half plane with $\lambda$- and $h$-lengths.}
  \label{fig:dec_tri}
\end{figure}

Ideal triangles generalize to arbitrary ideal polygons, with or without decorations.
An ideal quadrilateral admits exactly two triangulations.
If the edges $a,b,c,d$ are cyclically ordered, then each triangulation is determined by the choice of either the diagonal $e$, yielding the two triangles $\itr=(a,b,e)$ and $\itr'=(c,d,e)$, or the diagonal $f$, yielding the triangles $(a,f,d)$ and $(b,c,f)$, see Figure~\ref{fig:lambda_h_lengths}.
Substituting one diagonal by the other is referred to as a \emph{diagonal flip}.
A decoration with horocycles at the four vertices gives rise to six $\lambda$-lengths.
They satisfy the \emph{Ptolomy relation}
\begin{equation}\label{eq:ptolomy}
  \lambda(e)\lambda(f) \ = \ \lambda(a)\lambda(c) +
  \lambda(b)\lambda(d)\enspace,
\end{equation}
which follows immediately from~\eqref{eq:h-lengths}, see
Figure~\ref{fig:lambda_h_lengths} and \cite[Chapter~1, \S4.3]{pennerbook}.
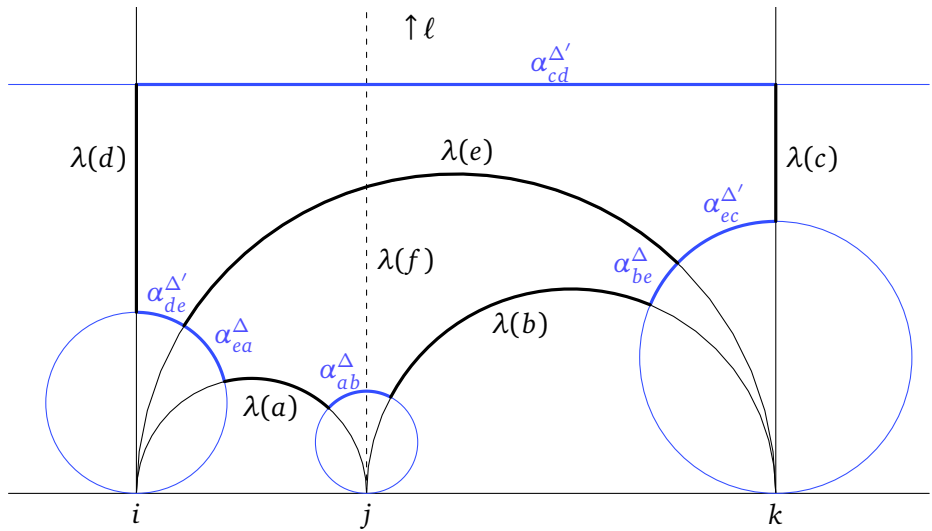
\begin{figure}[tbh]
  \begin{tikzpicture}[scale=1.7,line cap=round,line join=round,>=triangle 45,x=1.0cm,y=1.0cm]
\clip(-1.15,-0.3) rectangle (6.7,3.8);
\draw (-1,0.)-- (6.2,0.);
\draw (0.,0.)-- (0.,3.9);
\draw (5.,0.)-- (5.,3.9);
\draw[dash pattern = on 2pt off 3pt] (1.8,0) -- (1.8,4);

\draw [shift={(0.9,0.)}] plot[domain=0.:3.141592653589793,variable=\t]({1.*0.9*cos(\t r)+0.*0.9*sin(\t r)},{0.*0.9*cos(\t r)+1.*0.9*sin(\t r)});
\draw [shift={(3.4,0.)}] plot[domain=0.:3.141592653589793,variable=\t]({1.*1.6*cos(\t r)+0.*1.6*sin(\t r)},{0.*1.6*cos(\t r)+1.*1.6*sin(\t r)});
\draw [shift={(2.5,0.)}] plot[domain=0.:3.141592653589793,variable=\t]({1.*2.5*cos(\t r)+0.*2.5*sin(\t r)},{0.*2.5*cos(\t r)+1.*2.5*sin(\t r)});
\draw [color=blue2] (0.,0.7085636479715056) circle (0.7085636479715056cm);
\draw [color=blue2] (1.8,0.4) circle (0.4cm);
\draw [color=blue2] (5.,1.064438472742977) circle (1.064438472742977cm);
\draw [color=blue2] (-1,3.2)-- (6.2,3.2);

\draw [line width=1.2pt,color=blue2] (0.,3.2)-- (5.,3.2);
\draw [shift={(1.8,0.4)},line width=1.2pt,color=blue2]  plot[domain=1.0808390005411663:2.4072449859533545,variable=\t]({1.*0.4*cos(\t r)+0.*0.4*sin(\t r)},{0.*0.4*cos(\t r)+1.*0.4*sin(\t r)});
\draw [shift={(0.,0.7085636479715056)},line width=1.2pt,color=blue2]  plot[domain=0.2369071930091794:1.55,variable=\t]({1.*0.7085636479715057*cos(\t r)+0.*0.7085636479715057*sin(\t r)},{0.*0.7085636479715057*cos(\t r)+1.*0.7085636479715057*sin(\t r)});
\draw [shift={(5.,1.064438472742977)},line width=1.2pt,color=blue2]  plot[domain=1.5707963267948966:2.744872049573939,variable=\t]({1.*1.064438472742977*cos(\t r)+0.*1.064438472742977*sin(\t r)},{0.*1.064438472742977*cos(\t r)+1.*1.064438472742977*sin(\t r)});

\draw [line width=1.2pt] (0.,3.2)-- (0.,1.4171272959430112);
\draw [line width=1.2pt] (5.,3.2)-- (5.,2.128876945485954);
\draw [shift={(0.9,0.)},line width=1.2pt]  plot[domain=0.8364486591584586:1.8077035198040758,variable=\t]({1.*0.9*cos(\t r)+0.*0.9*sin(\t r)},{0.*0.9*cos(\t r)+1.*0.9*sin(\t r)});
\draw [shift={(3.4,0.)},line width=1.2pt]  plot[domain=1.1740757227790417:2.651635327336065,variable=\t]({1.*1.6*cos(\t r)+0.*1.6*sin(\t r)},{0.*1.6*cos(\t r)+1.*1.6*sin(\t r)});
\draw [shift={(2.5,0.)},line width=1.2pt]  plot[domain=0.8050544512459804:2.589228059352916,variable=\t]({1.*2.5*cos(\t r)+0.*2.5*sin(\t r)},{0.*2.5*cos(\t r)+1.*2.5*sin(\t r)});

\draw (0.75,0.85) node[anchor=north west] {$\lambda(a)$};
\draw (2.7,1.5) node[anchor=north west] {$\lambda(b)$};
\draw (5,2.8) node[anchor=north west] {$\lambda(c)$};
\draw (-0.6,2.8) node[anchor=north west] {$\lambda(d)$};
\draw [color=blue2text](3.65,2) node[anchor=north west] {$\hl^\itr_{be}$};
\draw [color=blue2text](0.53,1.45) node[anchor=north west] {$\hl^\itr_{ea}$};
\draw [color=blue2text](1.35,1.17) node[anchor=north west] {$\hl^\itr_{ab}$};
\draw [color=blue2text](4.35,2.5) node[anchor=north west] {$\hl^{\itr'}_{ec}$};
\draw [color=blue2text](-0.0,1.8) node[anchor=north west] {$\hl^{\itr'}_{de}$};
\draw [color=blue2text](3,3.6) node[anchor=north west] {$\hl^{\itr'}_{cd}$};
\draw (2.3,2.85) node[anchor=north west] {$\lambda(e)$};
\draw (1.8,2) node[anchor=north west] {$\lambda(f)$};
\draw (0,0) node[anchor=north] {$i$};
\draw (1.8,0) node[anchor=north] {$j$};
\draw (5,0) node[anchor=north] {$k$};
\node at (2.15,3.65) {$\uparrow$};
\draw (2.3,3.8) node[anchor=north] {$\ell$};

\end{tikzpicture}
  \caption{Decorated ideal quadrilateral in the upper half-plane with $\lambda$- and $h$-lengths.
  The diagonal $e$ induces a triangulation with triangles $\Delta=(a,b,e)$ and $\Delta'=(c,d,e)$.}
  \label{fig:lambda_h_lengths}
\end{figure}

\section{Punctured Riemann Surfaces}\label{sec:riemann}
\noindent
A \emph{Riemann surface} is a complex one-dimensional
manifold. Here we are only interested in
\emph{punctured Riemann surfaces}, i.e., compact Riemann surfaces with a
finite number $n\geq 1$ of points removed. Morevover, we consider only
punctured Riemann surfaces with negative Euler characteristic, which
excludes only spheres with one or two punctures. Under this
assumption, a punctured Riemann surface admits a conformal complete hyperbolic
metric with finite area, which is unique up isotopy. The punctures
of the Riemann surface correspond to \emph{cusps} of the hyperbolic
surface~(cf.\ Figure~\ref{fig:cusp}). Henceforth, we take the metric
point of view and consider punctured Riemann surfaces as complete
hyperbolic surfaces with finite area and at least one cusp.

Riemann surfaces arise in many concrete forms, and this accounts for
the enormous richness of the theory. In this section we briefly review
two of these forms:
quotients of the hyperbolic plane by groups of isometries, and surfaces constructed by gluing decorated ideal
triangles together. These two points of view are
particularly useful for the definition of the
secondary fan (cf.\ Section~\ref{sec:fan}) and the construction of
secondary polyhedra (cf.\ Section~\ref{sec:poly}).
For a more detailed discussion see, e.g., \cite{Beardon,Kapovich,Katok,Lehner,penner1,pennerbook}.
Instead of the half-plane model with isometry group $\PSL2R$, which is
the more classical approach, we use the hyperboloid model $\HH$ with isometry
group $\SO$ because this is how the Epstein--Penner convex hull construction is
usually described (cf.\ Section~\ref{sec:fan}). 

The group $\SO$ of orientation-preserving isometries of the hyperbolic
plane~$\HH$ acts naturally on the ideal boundary $S^{1}_{\infty}$.
An isometry is \emph{elliptic, parabolic or hyperbolic} if the number
of fixed ideal points is $0,1$ or $2$, respectively.
Only the identity fixes more than two ideal points. An elliptic
isometry has exactly one fixed point in $\HH$.
A \emph{Fuchsian group} is a discrete subgroup $\Gamma$ of $\SO$. 
It is a fundamental result that a subgroup of $\SO$ is discrete if and only if it acts properly discontinuously on $\HH$.
Furthermore, $\Gamma$ acts freely if and only if it does not contain any elliptic elements.
In this case, the quotient
\begin{equation}\label{eq:riemann}
  \Riem \ = \ \HH / \Gamma \enspace .
\end{equation}
with the induced hyperbolic metric is the Riemann surface defined by $\Gamma$.
Since $\HH$ is simply connected, $\Gamma$ is canonically isomorphic to
the fundamental group of $\Riem$ and acts by deck transformations. The
quotient $\HH/\Gamma'$ with repspect to another Fuchsian group $\Gamma'$ is
isometric to $\Riem$ if and only if $\Gamma$ and $\Gamma'$ are
conjugate subgroups of $\SO$.

In general, the area of the quotient $\Riem$ is not finite.
If it is finite, then the group $\Gamma$ is called a \emph{Fuchsian group of the first kind}.
In particular, this entails that $\Gamma$ is finitely
generated. Suppose further that $\Gamma$ contains a parabolic element
$\gamma$ with $p$ as its unique ideal fixed point.
Then the $\Gamma$-orbit of $p$ correponds to a \emph{cusp} of $\Riem$, see Figure~\ref{fig:cusp}.
From now on we suppose that the surface $\Riem = \HH / \Gamma$ has finite area and at least one cusp.
\begin{figure}[tbh]
  \mbox{
    \definecolor{ffffff}{rgb}{1.,1.,1.}
\begin{tikzpicture}[scale=1.5,line cap=round,line join=round,>=triangle 45,x=1.0cm,y=1.0cm]
\clip(-1.1,-0.6) rectangle (3.3,2.5);
\draw[color=white,fill=blue2!20] (-1.,3.5) -- (1.,3.5) -- (1.,0.) -- (-1.,0.) -- cycle;

\draw [dash pattern=on 2pt off 3pt] (0.,0.) -- (0.,2.7478120333134615);

\draw [dash pattern=on 2pt off 3pt] (2.,0.) -- (2.,2.7478120333134615);

\draw [shift={(-0.5,0.)},color=ffffff,fill=ffffff,fill opacity=1.0]  plot[domain=0.:3.141592653589793,variable=\t]({1.*0.5*cos(\t r)+0.*0.5*sin(\t r)},{0.*0.5*cos(\t r)+1.*0.5*sin(\t r)});
\draw [shift={(0.5,0.)},color=ffffff,fill=ffffff,fill opacity=1.0]  plot[domain=0.:3.141592653589793,variable=\t]({1.*0.5*cos(\t r)+0.*0.5*sin(\t r)},{0.*0.5*cos(\t r)+1.*0.5*sin(\t r)});

\draw [dd_middlearrow={latex},shift={(1.5,0.)}] plot[domain=0.:3.141592653589793,variable=\t]({-1.*0.5*cos(\t r)+0.*0.5*sin(\t r)},{0.*0.5*cos(\t r)+1.*0.5*sin(\t r)});
\draw [dd_middlearrow={latex},shift={(2.5,0.)}] plot[domain=0.:3.141592653589793,variable=\t]({1.*0.5*cos(\t r)+0.*0.5*sin(\t r)},{0.*0.5*cos(\t r)+1.*0.5*sin(\t r)});
\draw [dd_middlearrow={latex},shift={(-0.5,0.)}]  plot[domain=0.:3.141592653589793,variable=\t]({-1.*0.5*cos(\t r)+0.*0.5*sin(\t r)},{0.*0.5*cos(\t r)+1.*0.5*sin(\t r)});
\draw [dd_middlearrow={latex},shift={(0.5,0.)}]  plot[domain=0.:3.141592653589793,variable=\t]({1.*0.5*cos(\t r)+0.*0.5*sin(\t r)},{0.*0.5*cos(\t r)+1.*0.5*sin(\t r)});
\draw [shift={(0.165,0.)}] plot[domain=0.:3.141592653589793,variable=\t]({1.*0.165*cos(\t r)+0.*0.165*sin(\t r)},{0.*0.165*cos(\t r)+1.*0.165*sin(\t r)});
\draw [shift={(0.415,0.)}] plot[domain=0.:3.141592653589793,variable=\t]({1.*0.085*cos(\t r)+0.*0.085*sin(\t r)},{0.*0.085*cos(\t r)+1.*0.085*sin(\t r)});
\draw [shift={(0.75,0.)}] plot[domain=0.:3.141592653589793,variable=\t]({1.*0.25*cos(\t r)+0.*0.25*sin(\t r)},{0.*0.25*cos(\t r)+1.*0.25*sin(\t r)});
\draw [shift={(0.25,0.)},dash pattern=on 2pt off 3pt]  plot[domain=0.:3.141592653589793,variable=\t]({1.*0.25*cos(\t r)+0.*0.25*sin(\t r)},{0.*0.25*cos(\t r)+1.*0.25*sin(\t r)});
\draw (-2,0) -- (4,0);
\draw[middlearrow={latex}] (-1.,0.) -- (-1.,2.7478120333134615);
\draw [middlearrow={latex}] (1.,0.) -- (1.,2.7478120333134615);
\draw [middlearrow={latex}] (3.,0.) -- (3.,2.7478120333134615);
\begin{scriptsize}
\draw [fill=black] (-1.,0.) circle (0.5pt);
\draw [fill=black] (-1.,3.5) circle (0.5pt);
\draw [fill=black] (0.,0.) circle (0.5pt);
\draw [fill=black] (1.,0.) circle (0.5pt);
\draw [fill=black] (1.,3.5) circle (0.5pt);
\draw [fill=black] (2.,0.) circle (0.5pt);
\draw [fill=black] (3.,0.) circle (0.5pt);
\draw [fill=black] (0.5,0.) circle (0.5pt);
\draw [fill=black] (0.33,0.) circle (0.5pt);

\node at (0,-0.2) {$0$};
\node at (-1,-0.2) {$-1$};
\node at (1,-0.2) {$1$};
\node at (0.5,-0.2) {$\tfrac{1}{2}$};
\node at (0.3,-0.2) {$\tfrac{1}{3}$};
\node at (2,-0.2) {$2$};
\node at (3,-0.2) {$3$};
\end{scriptsize}
\node at (0.3,2) {Id};
\node at (2.3,2) {$A$};
\node at (0.5,0.3) {$B$};
\end{tikzpicture}
    \begin{tikzpicture}[scale = 0.9]
	\centering
	\path [out=251,in=321] (1,3) edge (-3,3);
	\path [out=245,in=127] (1.05,3) edge (0.9,-2);
	\path [out=320,in=124] (-3.05,3) edge (0.85,-2);
	\path [out=110,in=110] (-0.42,-0.05) edge (0.2,0.15);
	\path [out=-51,in=-95,dashed] (-0.42,-0.05) edge (0.22,0.17);
	\path [out=225,in=220,dashed] (0.22,2.19) edge (0.52,1.7);
	\path [out=10,in=70] (0.22,2.19) edge (0.51,1.7);
	\path [out=-45,in=-15,dashed] (-1.85,1.85) edge (-1.65,2.27);
	\path [out=120,in=170] (-1.85,1.85) edge (-1.65,2.27);

	\draw [->] (1.05,3.1) -- (1.125,3.25);
	\node [font = \scriptsize] at (1.3,3.4) {$p_1$};
	\draw [->] (-3.04,3.04) -- (-3.17,3.17);
	\node [font = \scriptsize] at (-3.3,3.3) {$p_2$};
	\draw [->] (0.92,-2.05) -- (1.02,-2.19);
	\node [font = \scriptsize] at (1.1,-2.36) {$p_3$};	
						
\end{tikzpicture}}
  \caption{Fuchsian group generated by the two parabolic transformations $A(z)=z+2$ and $B(z)=\tfrac{z}{2z+1}$ as a subgroup of $\PSL2R$ (left). The blue shaded redion is a fundamental domain. The resulting Riemann surface is a sphere with three cusps (right).}
  \label{fig:cusp}
\end{figure}
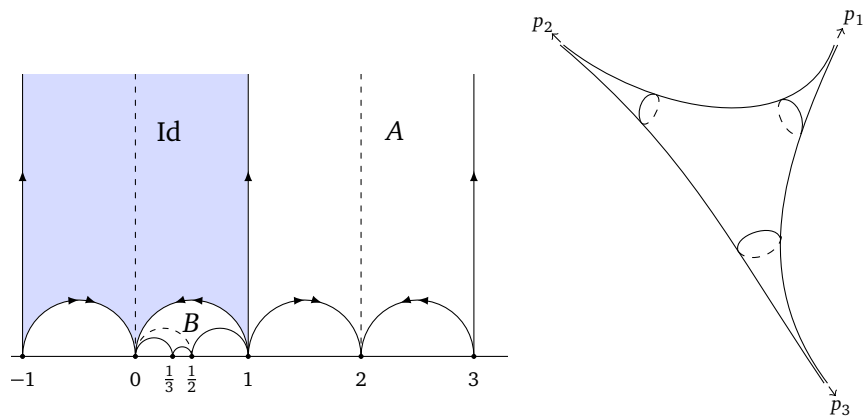

We will now discuss how the notions of horocycles, $\lambda$-lengths and $h$-lengths (cf.\
Section~\ref{sec:hyperbolic}) carry over to decorated Riemann surfaces
and ideal triangulations. This will lead to the second
point of view, surfaces glued from ideal triangles, and to some
practical formulas, which we will use in Sections~\ref{sec:fan}
and~\ref{sec:poly}.

A \emph{horocycle} at a cusp of $\Riem$ is a $\Gamma$-orbit of
horocycles centered at the parabolic fixed points of the
corresponding $\Gamma$-orbit.
If such a horocycle in $\HH$ is sufficiently small then its image under the projection to $\Riem$ is an embedded closed curve around the cusp.
A \emph{decoration} of $\Riem$ is a choice of one horocycle at each
cusp.

An \emph{ideal cell decomposition} of $\HH$ is a family of ideal polygons such that
\begin{inparaenum}
\item the polygons cover $\HH$ and
\item any two polygons intersect in a common edge, or the intersection is empty.
\end{inparaenum}
If each ideal polygon is an ideal triangle, the ideal cell decomposition is
called an \emph{ideal triangulation}.

An \emph{ideal cell decomposition} $S$ of $\Riem$ is a
$\Gamma$-invariant ideal cell decomposition of~$\HH$, all vertices of
which are parabolic fixed points of $\Gamma$. It decomposes $\Riem$
into finitely many ideal polygons, the \emph{faces} of $S$, which are glued
along their sides, the \emph{edges} of $S$. If all faces are
triangles, $S$ is an \emph{ideal triangulation} of $\Riem$.

Now let $\Riem$ be a punctured Riemann surface, decorated with a
horocycle at each cusp, and let $T$ be an ideal triangulation of
$\Riem$. The \emph{$\lambda$-length} $\lambda(e)$ of an edge $e$ of $T$ is
defined by~\eqref{eq:lambda_lightcone}, where $h_{1}$, $h_{2}$ are the
horocycles at the ends of a lift $\hat{e}$ of $e$ to the universal
cover $\HH$. 

Conversely, if $T$ is an ideal triangulation of the topological
surface $\Sgn$ of genus $g$ with $n$ punctures, a positive function
$\lambda:E_{T}\rightarrow\R_{>0}$ on the set of edges determines a
complete hyperbolic metric with finite area on $\Sgn$, uniquely up to
isotopy, together with decorating horocycles at the cusps.  Simply construct decorated ideal triangles with
$\lambda$-lengths determined by $\lambda$, one triangle for each face
of $T$, and glue them together according to the combinatorics of $T$
so that the horocycles fit toghether at the vertices.



The $h$-lengths of horocyclic arcs in the corners of the triangles of
$T$ are determined by the $\lambda$-lengths of the edges via
equation~\eqref{eq:h-lengths}. If $\Delta$ is a triangle of $T$ with
edges $a$, $b$, $c$ in the cyclic order induced by the orientation of
$\Riem$, we label the corners of $\Delta$ by $(\Delta, a, b)$,
$(\Delta, b, c)$, $(\Delta, c, a)$, and denote the corresponding
$h$-lengths by $\alpha^{\Delta}_{ab}$, $\alpha^{\Delta}_{bc}$,
$\alpha^{\Delta}_{ca}$.

\begin{remark}
  This notation is valid even if two sides of the triangle $\Delta$
  are glued together and correspond to the same edge of the
  triangulation $T$. For instance, if the edges of $\Delta$ are
  $a,a,c$ in cyclic order, then the three corners are labelled
  $(\Delta,a,a)$, $(\Delta,a,c)$ and $(\Delta,c,a)$.
\end{remark}

The total length of the decorating horocycle at the cusp $i$ is 
\begin{equation}\label{eq:omega_sum}
  \omega_i \ = \ \sum_{(\Delta,a,b)\sim i} \hl^\itr_{ab} \enspace ,
\end{equation}
where the sum is taken over all corners $(\itr,a,b)$ of $T$ incident
with cusp $i$. We call $\omega_{i}$ the \emph{weight} of cusp $i$ of
the decorated surface. Note that the weight of a cusp does not depend on the ideal triangulation. 

\section{The Secondary Fan}
\label{sec:fan}
\noindent
Epstein and Penner's convex hull construction~\cite{epstein_penner,penner1,pennerbook} is fundamental for the definition of the secondary fan of a punctured Riemann surface (cf.\ \ref{def:sfan}). This construction produces an ideal cell decomposition for each decorated Riemann surface $\Riem=\HH/\Gamma$ with $n \geq 1$ cusps in the following way.
By definition, the horocycle at the $i$th cusp corresponds to a $\Gamma$-orbit $\B_i$ of points in the positive light cone $\Lplus$.
Their union $\B = \bigcup_{i=1}^n \B_i$ is a countably infinite set as $\Gamma$ is finitely generated.
Now consider the Euclidean convex hull 
\begin{equation}
  \label{eq:Chull}
  \C \ = \ \conv(\B) \enspace.
\end{equation}
The following is a key observation.
\begin{proposition}[{\cite{epstein_penner}, \cite[Chapter 4, \S1]{pennerbook}}]\label{prop:convexhull_structure}
  The union $\B\subseteq\Lplus$ of $\Gamma$-orbits corresponding to a
  decoration of $\Riem=\HH/\Gamma$ is discrete and closed in $\Lplus
  \cup \{0\}$.  
  The faces of the boundary of the convex hull $\C$ project to an
  ideal cell decomposition of~$\Riem$.
\end{proposition}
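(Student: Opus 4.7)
The proof naturally splits into two parts: establishing that $\B$ is discrete and closed in $\Lplus\cup\{0\}$, and analyzing the boundary structure of the convex hull $\C=\conv(\B)$.

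For the first part, since $\B$ is a finite union of $\Gamma$-orbits, it suffices to treat a single orbit $\Gamma\cdot v$ with $v\in\Lplus$. I would argue by contradiction: suppose a sequence $\gamma_k v$ converges to some $w\in\Lplus$ with $w\neq 0$. The plan is to exploit the identification of points in $\Lplus$ with horocycles in $\HH$. For a fixed basepoint $p\in\HH$, the pairing $-\scalp{p,u}$ is continuous and strictly positive on $\Lplus$, and it controls the signed hyperbolic distance from $p$ to the horocycle $h(u)$. The assumed convergence $\gamma_k v\to w$ therefore forces $-\scalp{\gamma_k^{-1}p,v}$ to stay bounded, so each $\gamma_k^{-1}p$ lies in a fixed tubular neighborhood of $h(v)$. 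The stabilizer of $v$ in $\Gamma$ is a cyclic parabolic subgroup acting cocompactly on $h(v)$, and hence on this neighborhood; after replacing each $\gamma_k^{-1}$ by a suitable coset representative, the points $\gamma_k^{-1}p$ lie in a compact subset of $\HH$. Proper discontinuity of the $\Gamma$-action on $\HH$ then forces $\gamma_k$ to take only finitely many values modulo the stabilizer, contradicting convergence to a point outside the orbit.

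For the second part, I would analyze the portion of $\partial\C$ that faces the origin. Since $\B$ accumulates only at $0$, this ``lower'' boundary is well defined and separates the origin from the bulk of $\C$; its supporting affine hyperplanes are precisely those of the form $\{x:\scalp{x,\xi}=-1\}$ with $\xi\in\HH$. For each such hyperplane, the face $F$ it cuts out is a finite convex polytope with vertices in $\B$: properness of $-\scalp{\xi,\cdot}$ on $\Lplus$ combined with the discreteness of $\B$ from the first part shows that only finitely many points of $\B$ can meet any fixed hyperplane of this form. Radial projection from the origin then carries $F$ onto the ideal polygon in $\HH$ whose ideal vertices are the rays $\R_{>0}\cdot b$ for $b\in F\cap\B$. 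A convexity argument shows that every ray from the origin into the interior of $\Lplus$ meets this lower boundary in exactly one relative-interior point, so the ideal polygons tile $\HH$ face-to-face. The whole construction is $\Gamma$-equivariant by definition of $\B$, so the decomposition descends to an ideal cell decomposition of $\Riem=\HH/\Gamma$.

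The chief technical obstacle is verifying local finiteness of the face structure on $\partial\C$ and controlling its behavior as one approaches the origin, where $\B$ accumulates. Both issues are resolved by the properness of the Minkowski pairing $\scalp{\xi,\cdot}$ on $\Lplus$ for each $\xi\in\HH$: it confines all combinatorial questions about supporting hyperplanes of the form $\{\scalp{\cdot,\xi}=-1\}$ to compact regions of $\Lplus$, where the discreteness of $\B$ established in the first part suffices to conclude.
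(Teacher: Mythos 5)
A preliminary remark: the paper offers no proof of this proposition---it is quoted verbatim from Epstein--Penner and Penner's book---so your sketch has to be measured against the standard argument in \cite{epstein_penner}. Your first part is essentially that argument and is correct: the invariance $\scalp{\gamma_k^{-1}p,v}=\scalp{p,\gamma_k v}$ together with the strict negativity of $\scalp{p,w}$ for $p\in\HH$ and $w\in\Lplus$ traps the points $\gamma_k^{-1}p$ in a region \emph{between two concentric horocycles} (not merely a sublevel set, which would be a non-cocompact horoball---your phrase ``tubular neighborhood'' should be read this way), the cyclic parabolic stabilizer of $v$ acts cocompactly there, and proper discontinuity finishes. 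One small reading note: $0$ genuinely is an accumulation point of $\B$, so the correct conclusion is that $\B$ is discrete and $\B\cup\{0\}$ is closed; your argument gives exactly that.

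The second part has a genuine gap. You assert that the supporting hyperplanes of the part of $\partial\C$ facing the origin ``are precisely those of the form $\{x:\scalp{x,\xi}=-1\}$ with $\xi\in\HH$,'' and everything downstream---finiteness of each face, the radial projection onto ideal polygons, the claim that each ray through $\HH$ meets the lower boundary exactly once---depends on the normal vector $\xi$ being timelike. This is the heart of the Epstein--Penner theorem, not a consequence of convexity: a priori a supporting hyperplane at such a boundary point could have null or spacelike normal, in which case it meets $\Lplus$ in an unbounded set, the face need not be a polytope, and the projection does not yield a cell of an ideal decomposition. Ruling this out is precisely where the hypothesis that $\Gamma$ has finite covolume enters: one needs that the projectivized orbit of $\B$ is dense in $S^1_\infty$ (to exclude spacelike normals) and a quantitative estimate on the heights of orbit points near each parabolic fixed point (to exclude null normals), using that every ideal point under $\B$ is itself a cusp with an infinite parabolic stabilizer. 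For Fuchsian groups of the second kind the statement is simply false, yet nothing in your sketch uses finite area. Relatedly, ``a convexity argument shows that every ray from the origin into the interior of $\Lplus$ meets this lower boundary in exactly one relative-interior point'' presupposes that $\C$ is closed and that $0\notin\C$; closedness of $\conv(\B)$ for a non-compact $\B$ is itself one of the lemmas that must be proved from discreteness. Your properness and local-finiteness discussion is fine once timelike normals are established, but that establishment is the missing core of the proof.
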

Akiyoshi \cite{akiyoshi} generalized the convex hull construction for \emph{partially decorated} surfaces, i.e., decorations with horocycles at some, and at least one, of the cusps.
For each cusp $i$ that is not decorated, the induced decomposition of $\Riem$ has a punctured face containing the $i$th cusp in its interior.
Following \cite[\S\S 4--5]{springb_uniform} we call the ideal cell decomposition that arises from the convex hull construction the \emph{Delaunay decomposition} of the decorated Riemann surface.

We are interested in the different Delaunay decompositions of a fixed Riemann surface $\Riem$ with different decorations.  
To this end, we parametrize a decoration by its \emph{weight} vector
\begin{equation}
	\label{eq:weightvec}
	\omega \ = \ (\omega_1,\dots,\omega_n) ~\in~ \R_{\geq 0}^n \setminus \{0\} \enspace ,
\end{equation}
where $\omega_i$ is the length of the decorating horocycle at the $i$th cusp (cf.\ Section~\ref{sec:riemann}).
Zero weights correspond to undecorated cusps. The origin $\omega=0$ is not an admissible weight vector because the convex hull construction requires at least one decorated cusp.
\begin{definition}
  \label{def:cone}
  Let $\Riem$ be a Riemann surface with $n$ cusps.
  \begin{enumerate}
  \item For a weight vector~$\omega$ as in~\eqref{eq:weightvec}, let
    $\DD(\omega)$ be the Delaunay decomposition of $\Riem$ obtained
    for the corresponding decoration by the convex hull construction.
    
  \item For an ideal cell decomposition $S$ of $\Riem$, we call
    \[ \Cone(S) \ = \ \{~ \omega \in \Rpos^n\setminus \{0\} ~ | ~ S
    \preceq \DD(\omega) ~\} \]
    the \emph{secondary cone} of $S$, where we write $S_1 \preceq S_2$
    if $S_1$ refines $S_2$, i.e., every cell of $S_1$ is contained in
    some cell of $S_2$.
  \end{enumerate}	
\end{definition}

Note that the secondary cones do not contain $0$. This reflects the fact that the origin is not an admissible weight vector. 

We will see that the secondary cones are polyhedral cones in $\Rpos^n\setminus \{0\}$ (cf.\ Theorem~\ref{thm:cones}). This is a consequence of the following local characterization of Delaunay decompositions, see~\cite[Ch.~4, Lemma~1.7]{pennerbook} and~\cite{akiyoshi}.

\begin{definition}\label{def:local_Del}
	Let $T$ be an ideal triangulation of a decorated Riemann surface. 
	We say that an edge $e$ of $T$ \emph{satisfies the local Delaunay condition} if the sum of adjacent horocyclic arcs minus the sum of opposite horocyclic arcs is nonnegative, i.e., 
	\begin{equation}\label{eq:local_Del}
	\hl^\itr_{ea} + \hl^\itr_{be} + \hl^{\itr'}_{de} + \hl^{\itr'}_{ec} - \hl^\itr_{ab} - \hl^{\itr'}_{cd} \ \geq \ 0 \enspace ,
	\end{equation} 
	where $a$, $b$, $e$ and $c$, $d$, $e$ are the edges of the
        triangles $\Delta$ and $\Delta'$ containing $e$ (cf. Figure~\ref{fig:lambda_h_lengths}).
\end{definition}

\begin{lemma}\label{lem:local_Del}
	Let $S$ be an ideal cell decomposition of $\Riem$ and let $T$ be an ideal triangulation refining $S$. The the following statements for $\omega \in \Rpos^n\setminus \{0\}$ are equivalent:
	\begin{enumerate}
		\item $\omega\in\Cone(S)$
		\item Every edge $e$ of $T$ satisfies the local Delaunay condition~\eqref{eq:local_Del}, and equality holds if $e$ is not an edge of $S
		$. 
	\end{enumerate}	
\end{lemma}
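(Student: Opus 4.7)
My plan is to reduce the global statement to edge-by-edge conditions, combining Penner's local characterization of horocyclic Delaunay edges with a short combinatorial argument about the face-refinement order.

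The essential ingredient, which I would invoke from~\cite[Ch.~4, Lemma~1.7]{pennerbook} (see also~\cite{akiyoshi}), is a single-edge trichotomy: for each edge $e$ of $T$ shared by triangles $\itr,\itr'$ as in Figure~\ref{fig:lambda_h_lengths}, the inequality~\eqref{eq:local_Del} is strict precisely when $e$ is an edge of $\DD(\omega)$, holds with equality precisely when $e$ lies in the interior of a $2$-cell of $\DD(\omega)$, and is violated precisely when $T$ fails to refine $\DD(\omega)$ at $e$. The proof of the trichotomy lifts the quadrilateral $\itr\cup\itr'$ to four vectors in $\Lplus$ via the horocyclic covectors, interprets the local convexity of the boundary $\partial\C$ at the ridge above $e$ as the sign of a $4\times 4$ determinant in those four lifted vectors, and rewrites this sign via~\eqref{eq:lambda_lightcone} and~\eqref{eq:h-lengths} as the $h$-length combination on the left-hand side of~\eqref{eq:local_Del}. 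This Minkowski-to-$h$-length translation is the main technical step; in a fully self-contained presentation the real work would lie here, and I will simply cite it.

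Granted the trichotomy, the lemma is bookkeeping. Since the three decompositions $T$, $S$, and $\DD(\omega)$ share the cusps of $\Riem$ as their vertex set, the relation $S \preceq \DD(\omega)$ is equivalent to the conjunction of $T \preceq \DD(\omega)$ together with the condition that every edge of $\DD(\omega)$ is an edge of $S$. For (i)$\Rightarrow$(ii): the chain $T \preceq S \preceq \DD(\omega)$ forces every edge of $T$ to satisfy~\eqref{eq:local_Del}; for any edge $e$ of $T$ that is not an edge of $S$, the edge $e$ cannot be an edge of $\DD(\omega)$ either, so the trichotomy forces equality. For (ii)$\Rightarrow$(i): the universal validity of~\eqref{eq:local_Del} immediately yields $T \preceq \DD(\omega)$; and for any edge $e$ of $\DD(\omega)$ the inequality is strict at $e$, so by the equality hypothesis $e$ must belong to $S$. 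The two conditions above are thus both in place, giving $\omega\in\Cone(S)$.
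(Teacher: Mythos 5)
Your proposal is correct and follows the same route as the paper, which simply defers this lemma to Penner \cite[Ch.~4, Lemma~1.7]{pennerbook} and Akiyoshi \cite{akiyoshi} without writing out any argument. You correctly isolate the cited single-edge characterization as the imported ingredient and your combinatorial bookkeeping (in particular the equivalence of $S\preceq\DD(\omega)$ with $T\preceq\DD(\omega)$ plus every edge of $\DD(\omega)$ being an edge of $S$) is sound, also in the partially decorated case.
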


\begin{theorem}\label{thm:cones}
  For any Delaunay decomposition $\DD$ of $\Riem$ the set $\Cone(\DD)$ is a closed polyhedral cone in $\Rpos^n \setminus \{0\}$. The faces of the secondary cone $\Cone(\DD)$ are precisely the secondary cones $\Cone(\DD')$ of Delaunay decompositions $\DD'$ that are refined by $\DD$.
\end{theorem}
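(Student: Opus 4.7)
The plan is to reduce everything to Lemma~\ref{lem:local_Del} by exhibiting the $h$-lengths $\alpha^{\Delta}_{ab}$ as linear functions of $\omega$, so that the local Delaunay conditions become linear (in)equalities in $\omega$.

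First I would choose an ideal triangulation $T$ refining $\DD$ (which exists by further triangulating any non-triangular face of $\DD$). By Lemma~\ref{lem:local_Del}, membership in $\Cone(\DD)$ is described by a finite list of local Delaunay conditions at the edges of $T$: an inequality~\eqref{eq:local_Del} at every edge of $T$, with equality at those edges of $T$ that are not edges of $\DD$. Next I would verify the linearity: if $v_{i}\in\Lplus$ is the light-cone vector representing the horocycle at cusp $i$, then scaling $v_{i}\mapsto v_{i}/\omega_{i}$ corresponds to scaling the horocycle length by $\omega_{i}$, so for an edge $e$ between cusps $i,j$ the identity $\lambda(e)^{2}=-\tfrac{1}{2}\scalp{v_{i},v_{j}}$ gives $\lambda(e)^{2}\propto 1/(\omega_{i}\omega_{j})$. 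Substituting this into~\eqref{eq:h-lengths} for a corner $(\Delta,a,b)$ at cusp $i$ (where $a,b$ are incident to $i$ and $c$ is the opposite edge between the other two cusps), all $\omega_{j},\omega_{k}$ factors cancel and one obtains $\alpha^{\Delta}_{ab}=p^{\Delta}_{ab}\,\omega_{i}$ for a positive constant $p^{\Delta}_{ab}$ depending only on $T$ and the hyperbolic structure of $\Riem$ (and consistent with~\eqref{eq:omega_sum}). Consequently the defining conditions of $\Cone(\DD)$ are finitely many linear (in)equalities in $\omega\in\Rpos^{n}$, which proves that $\Cone(\DD)\cup\{0\}$ is a closed polyhedral cone in $\Rpos^{n}$.

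For the face characterization, I would argue in both directions. Given a face $F$ of $\Cone(\DD)$, it arises by turning a subset $E_{0}$ of the strict-inequality constraints (those indexed by edges $e$ of $\DD$ that lie in $T$) into equalities. Let $\DD'$ be the cell complex obtained from $\DD$ by erasing every edge in $E_{0}$ and merging the two adjacent $\DD$-cells. Equality in~\eqref{eq:local_Del} for $\omega\in F$ means, via the Epstein--Penner construction, that the two lifted faces meeting along $e$ become coplanar in $\conv(\B)$, so they amalgamate into a single face of the convex hull; hence $\DD'$ is again an ideal cell decomposition, refined by $\DD$, and $T$ refines $\DD'$. Applying Lemma~\ref{lem:local_Del} to $\DD'$ with the same refining triangulation $T$ identifies $\Cone(\DD')$ with exactly the subset of $\Cone(\DD)$ on which the equalities indexed by $E_{0}$ (and those already forced by $\DD$) hold, i.e.\ $\Cone(\DD')=F$. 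Conversely, any Delaunay decomposition $\DD'$ refined by $\DD$ has $\Cone(\DD')\subseteq \Cone(\DD)$ cut out by precisely such a set of additional equalities, so $\Cone(\DD')$ is a face of $\Cone(\DD)$.

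The step I expect to require the most care is Step 4, namely verifying that the cell complex $\DD'$ produced by deleting edges in $E_{0}$ really is an ideal cell decomposition of $\Riem$ and coincides with $\DD(\omega)$ for $\omega\in\mathrm{relint}(F)$. This is a geometric statement about the Epstein--Penner convex hull (coplanarity of neighboring faces is exactly the failure of strict local Delaunay), and it is what ties the combinatorial face structure of the polyhedral cone to the geometry of horocyclic Delaunay decompositions; once this is in place, the two directions in Step 4 and Step 5 match up cleanly and yield the claimed bijection between faces of $\Cone(\DD)$ and coarsenings of $\DD$.
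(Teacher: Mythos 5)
Your proposal is correct and follows essentially the same route as the paper: choose an ideal triangulation $T$ refining $\DD$, observe that each $h$-length scales linearly in the weights ($\alpha^{\Delta}_{ab}=\bar\alpha^{\Delta}_{ab}\,\omega_j$ with $j$ the cusp at the corner), so that Lemma~\ref{lem:local_Del} turns membership in $\Cone(\DD)$ into finitely many homogeneous linear equalities and inequalities of the form~\eqref{eq:edge_ineq}. Your treatment of the face correspondence is more explicit than the paper's, which compresses it into the single observation that $\tilde D=\DD(\omega)$ if and only if $\tilde D$ is the coarsest Delaunay decomposition with $\omega\in\Cone(\tilde D)$, but it rests on the same identification of tight constraints with omitted edges.
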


\begin{proof}
  Let $T$ be an ideal triangulation refining $\DD$. 
  Denote the $h$-lengths at the corners of $T$ for the decoration with constant weight vector~$1$ by  $\bar\alpha$. Then the $h$-lengths for an arbitrary weight vector $\omega$ are $\alpha^{\Delta}_{ab}=\bar\alpha^{\Delta}_{ab}\,\omega_j$, where $j$ is the cusp at the corner $(\Delta, a, b)$.
  The local Delaunay condition~\eqref{eq:local_Del} becomes
  \begin{equation}
    \label{eq:edge_ineq}
    \left( \bar\hl^\itr_{ea} + \bar\hl^{\itr'}_{de} \right)\omega_i +  \left( \bar\hl^\itr_{be} + \bar\hl^{\itr'}_{ec} \right)\omega_k - \bar\hl^\itr_{ab}\omega_j - \bar\hl^{\itr'}_{cd}\omega_l 
    \ \geq \ 0
    \enspace ,
  \end{equation}
  where $i$, $j$, $k$, $l$ are the incident cusps as shown in Figure~\ref{fig:lambda_h_lengths}.  
  Thus, $\Cone(\DD)$ is the solution space of nonstrict homogeneous linear equalities and inequalities, hence a closed polyhedral cone. The second statement of the theorem is a consequence of the following observation, which follows directly from Definition~\ref{def:cone}:
   A Delaunay decomposition $\tilde D$ and a weight vector $\omega$ satisfy $\tilde D=\DD(\omega)$ if and only if $\tilde D$ is the coarsest Delaunay decomposition for which $\omega\in\Cone(\tilde D)$.
\end{proof}

\begin{corollary}
	Let $\DD_1, \DD_2$ be Delaunay decompositions. 
	\begin{enumerate}
		\item If $\Cone(D_1)\cap\Cone(D_2)\not=\emptyset$ then there is a unique finest common coarsening $D_3$ of $D_1$ and $D_2$ among all Delaunay decompositions, and  $\Cone(D_1)\cap\Cone(D_2)=\Cone(D_3)$.
		\item Conversely, if $D_3$ is a finest common coarsening of $D_1$ and $D_2$ among all Delaunay decompositions, then $\Cone(D_1)\cap\Cone(D_2)=\Cone(D_3)$.
	\end{enumerate}

\end{corollary}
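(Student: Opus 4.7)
The proof rests on Theorem~\ref{thm:cones} together with the following consequence extracted from its proof: for every admissible weight vector $\omega$, $\DD(\omega)$ is the unique coarsest Delaunay decomposition whose secondary cone contains $\omega$; equivalently, $\DD(\omega)=\DD$ if and only if $\omega$ lies in the relative interior of $\Cone(\DD)$. My plan is to prove part~(i) in full, and then deduce part~(ii) as an immediate consequence.

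For part~(i), pick $\omega^{*}$ in the relative interior of $F:=\Cone(\DD_1)\cap \Cone(\DD_2)$, which is non-empty by assumption, and set $\DD_3:=\DD(\omega^{*})$. Since $\omega^{*}\in \Cone(\DD_i)$, the characterization above gives $\DD_i\preceq \DD_3$, so $\DD_3$ is a common Delaunay coarsening of $\DD_1$ and $\DD_2$. By Theorem~\ref{thm:cones}, $\Cone(\DD_3)$ is then a face of each $\Cone(\DD_i)$, and in particular $\Cone(\DD_3)\subseteq F$.

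The opposite inclusion $F\subseteq \Cone(\DD_3)$ is the crucial step. Fix a triangulation $T$ refining $\DD_1$ (hence also $\DD_3$); by Lemma~\ref{lem:local_Del}, $\Cone(\DD_1)$ is cut out of $\Rpos^{n}\setminus\{0\}$ by the linear inequalities $L_e(\omega)\geq 0$ for $e\in T$ together with the equalities $L_e(\omega)=0$ for those $e\in T$ not belonging to $\DD_1$, where $L_e$ denotes the linear form on the left-hand side of~\eqref{eq:edge_ineq}. Inside $\Cone(\DD_1)$ the subcone $\Cone(\DD_3)$ is cut out by the additional equalities $L_e(\omega)=0$ for those $e\in \DD_1$ that are merged away in passing to $\DD_3$; by the characterization, these are precisely the edges where $L_e(\omega^{*})=0$. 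The decisive observation is a convexity argument: a linear form which is non-negative on $F$ and vanishes at the relative interior point $\omega^{*}$ of $F$ must vanish on all of $F$. Indeed, for any $\omega'\in F$ the perturbations $\omega^{*}\pm\varepsilon(\omega'-\omega^{*})$ both lie in $F$ for $\varepsilon>0$ sufficiently small, and applying $L_e\geq 0$ to both, combined with $L_e(\omega^{*})=0$, forces $L_e(\omega')=0$. Consequently every $\omega'\in F$ satisfies the equalities defining $\Cone(\DD_3)$ within $\Cone(\DD_1)$, proving $F\subseteq \Cone(\DD_3)$ and hence $F=\Cone(\DD_3)$.

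To finish part~(i), let $\tilde \DD$ be any Delaunay common coarsening of $\DD_1$ and $\DD_2$. By Theorem~\ref{thm:cones}, $\Cone(\tilde \DD)$ is a face of each $\Cone(\DD_i)$, so $\Cone(\tilde \DD)\subseteq F=\Cone(\DD_3)$; picking $\omega$ in the relative interior of $\Cone(\tilde \DD)$ gives $\omega\in \Cone(\DD_3)$, whence $\DD_3\preceq \DD(\omega)=\tilde \DD$. This shows $\DD_3$ is the unique finest common coarsening, and part~(ii) follows at once, since any finest common coarsening $\DD_3$ has $\Cone(\DD_3)\subseteq F$ non-empty, reducing to part~(i). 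The main obstacle is the linear-forms bookkeeping in Step~3: one must verify that the equalities $L_e(\omega^{*})=0$ with $e\in \DD_1$ genuinely determine $\DD_3$ as the Delaunay decomposition obtained by merging cells of $\DD_1$ across those edges, a fact that is essentially implicit in the proof of Theorem~\ref{thm:cones}. Modulo this, the argument is pure bookkeeping with the refinement order via the face characterization.
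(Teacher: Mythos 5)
Your argument is correct, and it supplies in full the reasoning that the paper leaves implicit: the corollary is stated there without proof, as a direct consequence of Theorem~\ref{thm:cones} and the observation (made inside that theorem's proof) that $\DD(\omega)$ is the coarsest Delaunay decomposition whose secondary cone contains $\omega$. Your route --- pick $\omega^*$ in the relative interior of $F=\Cone(\DD_1)\cap\Cone(\DD_2)$, set $\DD_3=\DD(\omega^*)$, get $\Cone(\DD_3)\subseteq F$ from the face statement, and get $F\subseteq\Cone(\DD_3)$ from the fact that a linear form nonnegative on a convex set and vanishing at a relative interior point vanishes identically --- is exactly the natural way to make this precise. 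Two remarks. First, the ``main obstacle'' you flag at the end is not actually needed: for the inclusion $F\subseteq\Cone(\DD_3)$ you only use the implication $e\notin\DD_3\Rightarrow L_e(\omega^*)=0$, which is literally Lemma~\ref{lem:local_Del} applied to $S=\DD_3$; the converse (strict positivity of $L_e(\omega^*)$ on edges of $\DD(\omega^*)$) never enters. Second, the final step can be streamlined so as to avoid your preliminary equivalence ``$\DD(\omega)=\DD$ iff $\omega\in\operatorname{relint}\Cone(\DD)$'': since $\tilde\DD$ is a Delaunay decomposition, $\tilde\DD=\DD(\omega_0)$ for some admissible $\omega_0$, and then $\omega_0\in\Cone(\tilde\DD)\subseteq F=\Cone(\DD_3)$ gives $\DD_3\preceq\DD(\omega_0)=\tilde\DD$ directly. (Your equivalence is nonetheless correct for Delaunay decompositions, by essentially this same realizability argument combined with the coarsest-element characterization, so your version stands as written.)
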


The number of Delaunay decompositions of a fixed punctured Riemann
surface $\Riem$ with variable decoration, hence the number of
secondary cones, is finite~\cite{akiyoshi}, see also \cite{luo}.  We
arrive at a central object of our study.

\begin{thmdef}
  \label{def:sfan}
  The collection of secondary cones of Delaunay decompositions,   
  \begin{equation}
    \secfan(\Riem) \ = \ \{~ \Cone(\DD) ~|~ \DD \text{ Delaunay
      decomposition of } \Riem ~\} \enspace.\label{eq:sfan}
  \end{equation}
  is a finite polyhedral fan with support $\Rpos\setminus\{0\}$, called the \emph{secondary fan} of the punctured Riemann surface $\Riem$.
\end{thmdef}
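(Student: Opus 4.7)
The plan is to verify that $\secfan(\Riem)$ satisfies the three defining axioms of a polyhedral fan with support $\Rpos^n\setminus\{0\}$: each element is a polyhedral cone, the collection is closed under passing to faces, and the intersection of any two elements is a common face of each. The finiteness and the identification of the support will follow by citing results already established above. Almost all of the substantive work is packaged into Theorem~\ref{thm:cones} and the Corollary that follows it, so the proof reduces to careful assembly.

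First I would invoke Theorem~\ref{thm:cones} to conclude that each $\Cone(\DD)\in\secfan(\Riem)$ is a closed polyhedral cone in $\Rpos^n\setminus\{0\}$, and that its faces are precisely the secondary cones $\Cone(\DD')$ corresponding to Delaunay decompositions $\DD'$ with $\DD\preceq\DD'$. This simultaneously gives polyhedrality of every member of $\secfan(\Riem)$ and closure of the collection under faces.

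Next I would verify the face-intersection axiom using the Corollary preceding the statement: if $\Cone(\DD_1)\cap\Cone(\DD_2)\neq\emptyset$, then there is a unique finest common coarsening $\DD_3$ of $\DD_1$ and $\DD_2$ among Delaunay decompositions, and $\Cone(\DD_1)\cap\Cone(\DD_2)=\Cone(\DD_3)$. Since $\DD_1,\DD_2\preceq\DD_3$, a second application of Theorem~\ref{thm:cones} identifies $\Cone(\DD_3)$ as a face of each of $\Cone(\DD_1)$ and $\Cone(\DD_2)$, which is exactly what the fan axiom demands; the case of empty intersection is vacuous once we work throughout in the punctured positive orthant $\Rpos^n\setminus\{0\}$.

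Finally, the support statement is immediate from Definition~\ref{def:cone}: every weight $\omega\in\Rpos^n\setminus\{0\}$ lies in $\Cone(\DD(\omega))$, and conversely each secondary cone sits in $\Rpos^n\setminus\{0\}$ by construction. Finiteness of $\secfan(\Riem)$ is Akiyoshi's theorem, cited in the paragraph preceding the statement. I do not foresee a genuine obstacle: the real content is the local Delaunay characterization (Lemma~\ref{lem:local_Del}) that powers Theorem~\ref{thm:cones}, and the present task is purely one of stitching the pieces together. The only mild subtlety is bookkeeping around the removed origin, but since both the definition of $\Cone(\DD)$ and the Corollary are phrased within $\Rpos^n\setminus\{0\}$, no adjustment is needed.
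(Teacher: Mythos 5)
Your proposal is correct and matches the paper's (implicit) argument: the paper gives no separate proof for this Theorem-and-Definition, treating it precisely as the assembly of Theorem~\ref{thm:cones} (polyhedrality and the face description), the ensuing Corollary (intersections are common faces via the finest common coarsening), Definition~\ref{def:cone} (every admissible $\omega$ lies in $\Cone(\DD(\omega))$, giving the support), and Akiyoshi's finiteness result. Your handling of the removed origin is also consistent with the paper's conventions.
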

	
The next examples should be compared with \cite[\S3]{TillmannWong:2016}.

\begin{example}[Once punctured torus with symmetric metric]\label{ex:T1}
Consider the triangulation $\DD$ of the once punctured torus depicted in Figure~\ref{fig:ex_T1}.
We equip this torus with a decorated hyperbolic structure by choosing
$\lambda$-lengths 3 and 4 for the outer edges and 5 for the
diagonal. Note that the $\lambda$-length of the flipped diagonal is
also $5$ by Ptolemy's relation. As always with once-punctured
surfaces, there is a unique Delaunay decomposition independent of the
choice of horocycle. In this case it is the decomposition obtained by
omitting either diagonal in Figure~\ref{fig:ex_T1}. It has one face,
which is a quadrilateral.
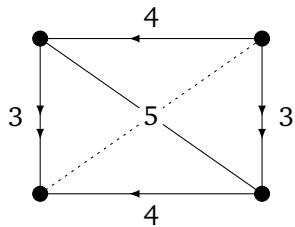
\begin{figure}[tbh]
	\begin{tikzpicture}[scale=1.2,mydot_out/.style={circle,fill=black,draw,outer sep=0pt,inner sep=2pt}, mydot_in/.style={circle,fill=white,draw,outer sep=0pt,inner sep=2pt}]
	\oncepuncturedtorus
	\node at (0,1.1){$4$};
	\node at (0,-1.1){$4$};
	\node at (1.5,0){$3$};
	\node at (-1.5,0){$3$};
	\draw node[circle,fill=white,inner sep=0.6pt] {$5$};
\end{tikzpicture}
	\caption{The triangulation $\DD$ of the once punctured torus from Example~\ref{ex:T1}. The dashed edge is obtained by flipping the diagonal.}
	\label{fig:ex_T1}
\end{figure}
 
\end{example}

\begin{example}[Twice punctured torus $\Riem_{1,2}$ with symmetric metric]\label{ex:T2}
  A twice punctured torus can be represented by a fundamental hexagon
  with opposite edges identified.  Let $\DD$ be the triangulation
  illustrated in the lower right of Figure~\ref{fig:ex_T2}, where the
  first puncture is black and the second one white.  Consider the
  decorated hyperbolic structure that is obtained by setting all six
  $\lambda$-lengths to $1$.  We denote the resulting hyperbolic torus
  with two cusps by $\Riem_{1,2}$.  By equation~\eqref{eq:h-lengths}
  all $h$-lengths are equal to $1$ as well, yielding $\omega_1=9$ and
  $\omega_2=3$ via equation~\eqref{eq:omega_sum}.  Since the local
  Delaunay conditions~\eqref{eq:local_Del} hold for all edges with
  strict inequality, it follows that $\DD=\DD(9,3)$.  The
  inequalities~\eqref{eq:edge_ineq} defining the secondary cone
  $C(\DD)$ become
  \begin{align*}
    \omega_1 - \omega_2 & \geq 0 \qquad \text{for the black/black edges and}\\
    \omega_2 & \geq 0 \qquad \text{for the black/white edges}.
  \end{align*}
  Hence $\Cone(\DD)$ is spanned by $(1,0)$ and $(1,1)$. 
  By flipping the three black/black edges one after the other and then the first one again, one obtains the triangulation $\DD'$.
  Using the Ptolomy relations~\eqref{eq:ptolomy}, we see that all white/white edges in $\DD'$ have $\lambda$-length $3$ with respect to weights $(9,3)$. 
  For $\DD'$, the inequalities~\eqref{eq:edge_ineq} become
  \[
  \begin{aligned}
    -\omega_1 + \omega_2 & \geq 0 \qquad \text{for the white/white edges and}\\ 
    \omega_1 & \geq 0 \qquad \text{for the black/white edges}.  
  \end{aligned}
  \]     
  Hence $\Cone(\DD^\prime)$ is spanned by $(0,1)$ and $(1,1)$, so
  $\DD'=\DD(3,9)$, for example.
  The Delaunay decomposition that corresponds to the one-dimensional cone $\R_{>0}\,(1,1)$ is obtained by omitting those edges of $\DD$ that are weakly Delaunay with respect to weights $(1,1)$.
  These are precisely the black/black edges.
  Equivalently, we could have started with $\DD'$ and omitted the white/white edges. 
  The Delaunay decomposition $\DD(1,1)$ is thus a decomposition into one ideal hexagon.
  If $\omega_2=0$, the black/white edges of $\DD$ are weakly Delaunay.
  Hence, the Delaunay decomposition $\DD(1,0)$ is a decomposition into an ideal triangle and a punctured ideal triangle.
  The case of $\omega_1=0$ is anlogous.
\end{example}

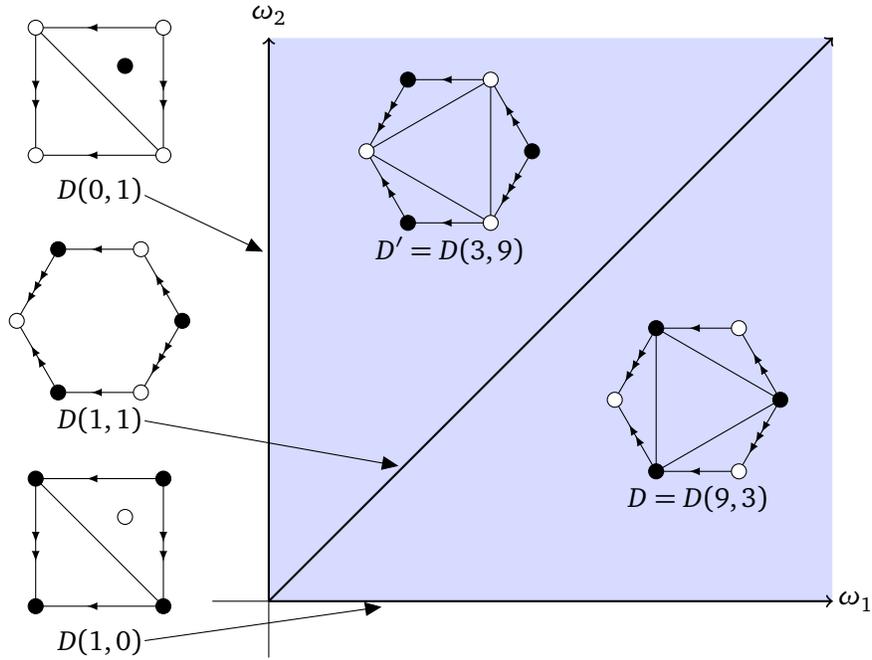
\begin{figure}[tbh]
  \begin{tikzpicture}[scale=1.5]

\draw[color=white,fill=blue2!20] (0,0) -- (5,5) -- (5,5) -- (5,0) -- cycle;
\draw[color=white,fill=blue2!20] (0,0) -- (5,5) -- (0,5) -- cycle;
\draw[<-,thick] (0,5) -- (0,0); \draw (0,0)-- (0,-0.5);	
\draw (-0.5,0)-- (0,0); \draw[<-,thick] (5,0) -- (0,0);
\draw[thick,->] (0,0) -- (5,5);
\node at (0,5.2) {$\omega_2$};
\node at (5.2,0) {$\omega_1$}; 

\node at (3.8,1.6) {
	\begin{tikzpicture}[scale=1.1]
	\hexagon
	\node at (0,-1.2) {$\DD = \DD(9,3)$};
	\end{tikzpicture}
	};

\node at (1.6,3.8) {
	\begin{tikzpicture}[scale=1.1]
	\hexagontwo 
	\node at (0,-1.2) {$\DD^\prime=\DD(3,9)$};
	\end{tikzpicture}
	};

\node at (-1.5,2.3) {
	\begin{tikzpicture}[scale=1.1]
	\hexagonempty
	\node at (0,-1.2) {$\DD(1,1)$};
	\end{tikzpicture}
	};
	
\node at (-1.5,0.3) {
	\begin{tikzpicture}[scale=1.2,mydot_out/.style={circle,fill=black,draw,outer sep=0pt,inner sep=2pt}, mydot_in/.style={circle,fill=white,draw,outer sep=0pt,inner sep=2pt}]
	\squarewithpuncture
	\node at (0,-1.1) {$\DD(1,0)$};
	\end{tikzpicture}
};

\node at (-1.5,4.3) {
	\begin{tikzpicture}[scale=1.2,mydot_out/.style={circle,fill=white,draw,outer sep=0pt,inner sep=2pt}, 	mydot_in/.style={circle,fill=black,draw,outer sep=0pt,inner sep=2pt}]
	\squarewithpuncture
	\node at (0,-1.1) {$\DD(0,1)$};
	\end{tikzpicture}
};	

\draw[->,>=triangle 45] (-1.1,3.6) -- (-0.05,3.1);
\draw[->,>=triangle 45] (-1.1,1.6) -- (1.15,1.2);
\draw[->,>=triangle 45] (-1.1,-0.35) -- (1,-0.05);

\end{tikzpicture}
  \caption{Secondary fan of the twice punctures torus $\Riem_{1,2}$ from Example~\ref{ex:T2} and its Delaunay decomposition.}
  \label{fig:ex_T2}
\end{figure}

\begin{example}[Twice punctured torus $\Riem'_{1,2}$ with generic metric] \label{ex:T2b}	
  Consider the decorated structure on the twice punctured torus $\DD$ from Example~\ref{ex:T2} that corresponds to $\lambda$-lengths $(1, \tfrac{3}{2}, 1, \tfrac{1}{2}, 1, \tfrac{3}{4})$, where we ordered the edges as indicated in Figure~\ref{fig:T2b}.
  We skip the calculations and present the secondary fan in Figure~\ref{fig:T2b}.
  It has nine maximal cones, each of which is associated to a triangulation.
  Adjacent maximal cones correspond to triangulations that differ in a flip.  
  Note that the triangulations $\DD$ and $\DD'$ occur as Delaunay triangulations, but this time in between there also are three triangulations that describe a flip path between $\DD$ and $\DD'$.
  Furthermore, when moving to the boundary of the secondary fan, e.g., with $\omega_1$ tending to $0$, the three black/white edges do not vanish simultaneously at $\omega_1=0$, as in Example~\ref{ex:T2}.
  Instead, two of them get flipped to become black/black edges.
  The two triangulations at the boundary of the secondary fan then contain only one black/white edge, appearing as the self folded edge of one triangle.
  The cases of $\omega_i=0$ with $i=1,2$ then correspond to decompositions of the torus into three ideal triangles and one punctured monogon.    
\end{example}
Experimental evidence suggests that the secondary fan of the twice punctured torus always has precisely nine maximal cones, for any choice of $\lambda$-lengths which is generic.

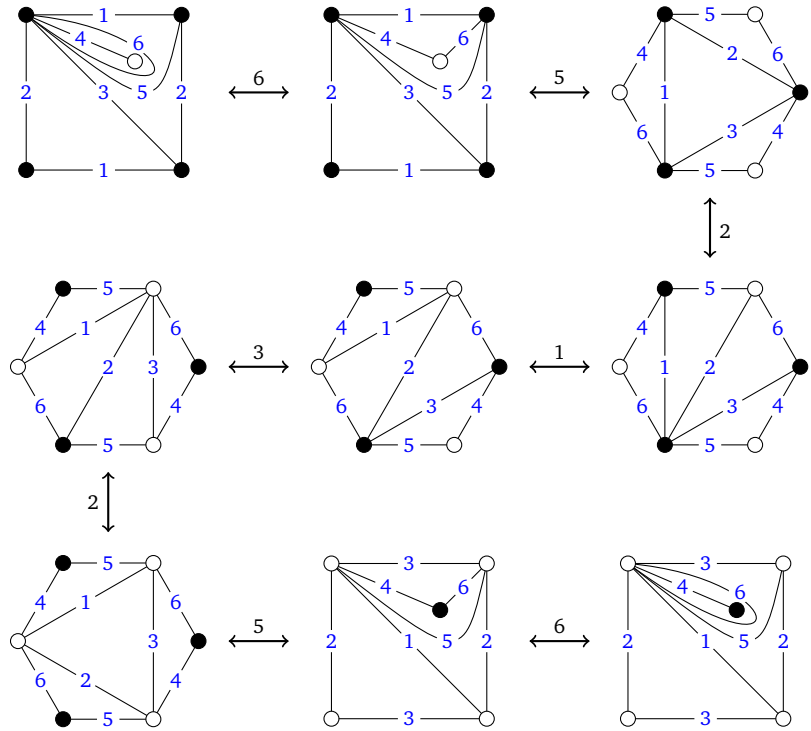
\begin{figure}[tbh]
	\begin{tikzpicture}
\node at (-4,3.65) {
		\begin{tikzpicture}[scale=1.46,mydot_out/.style={circle,fill=black,draw,outer sep=0pt,inner sep=2pt}, mydot_in/.style={circle,fill=white,draw,outer sep=0pt,inner sep=2pt}]
			\squarethreewithlabel{1}{2}{3}
		\end{tikzpicture}
};
\draw[thick,<->] (-2.4,3.65) -- (-1.6,3.65); \node at (-2,3.85) {\scriptsize $6$};
\node at (0,3.65) {
		\begin{tikzpicture}[scale=1.46,mydot_out/.style={circle,fill=black,draw,outer sep=0pt,inner sep=2pt}, mydot_in/.style={circle,fill=white,draw,outer sep=0pt,inner sep=2pt}]
			\squaretwowithlabel{1}{2}{3}
		\end{tikzpicture}
};
\draw[thick,<->] (1.6,3.65) -- (2.4,3.65); \node at (2,3.85) {\scriptsize $5$};
\node at (4,3.65) {
	\begin{tikzpicture}[scale=1.2]
			\hexagonwithlabel 
	\end{tikzpicture}
};
\draw[thick,<->] (4,2.25) -- (4,1.45); \node at (4.2,1.8) {\scriptsize $2$};
\node at (-4,0) {
		\begin{tikzpicture}[scale=1.2]
			\hexagonwithlabelthree
		\end{tikzpicture}
};
\draw[thick,<->] (-2.4,0) -- (-1.6,0); \node at (-2,0.2) {\scriptsize $3$};
\node at (0,0) {
		\begin{tikzpicture}[scale=1.2]
				\hexagonwithlabeltwo
		\end{tikzpicture}
};
\draw[thick,<->] (1.6,0) -- (2.4,0); \node at (2,0.2) {\scriptsize $1$};
\node at (4,0) {
		\begin{tikzpicture}[scale=1.2]
			\hexagonwithlabelone
		\end{tikzpicture}
};
\draw[thick,<->] (-4,-2.2) -- (-4,-1.4); \node at (-4.2,-1.8) {\scriptsize $2$};
\node at (-4,-3.65) {
		\begin{tikzpicture}[scale=1.2]
			\hexagonwithlabelfour
		\end{tikzpicture}
};
\draw[thick,<->] (-2.4,-3.65) -- (-1.6,-3.65); \node at (-2,-3.45) {\scriptsize $5$};
\node at (0,-3.65) {
	\begin{tikzpicture}[scale=1.46,mydot_out/.style={circle,fill=white,draw,outer sep=0pt,inner sep=2pt}, mydot_in/.style={circle,fill=black,draw,outer sep=0pt,inner sep=2pt}]
				\squaretwowithlabel{3}{2}{1}
			\end{tikzpicture}
};
\draw[thick,<->] (1.6,-3.65) -- (2.4,-3.65); \node at (2,-3.45) {\scriptsize $6$};
\node at (4,-3.65) {
		\begin{tikzpicture}[scale=1.46,mydot_out/.style={circle,fill=white,draw,outer sep=0pt,inner sep=2pt}, mydot_in/.style={circle,fill=black,draw,outer sep=0pt,inner sep=2pt}]
			\squarethreewithlabel{3}{2}{1}
		\end{tikzpicture}
};	
\end{tikzpicture}
	\caption{The flip sequence of Delaunay triangulations of the twice punctured torus $\Riem'_{1,2}$ from Example~\ref{ex:T2b}.} 
	\label{fig:T2b}
\end{figure}

\begin{example}[Sphere with three punctures]\label{ex:S3}
	The sphere with three punctures is special in two ways.
	First, its Teichm\"uller space is a point.
	We denote the unique hyperbolic sphere with three cusps by $\Riem_{0,3}$. 
	The decorated Teichm\"uller space becomes the space of decorations on $\Riem_{0,3}$.
	The second special feature of the sphere with three punctures is that is admits exactly four triangulations, each of which turns out to be Delaunay.
	The secondary fan of $\Riem_{0,3}$ is depicted in Figure~\ref{fig:fanS3}.
	The central top-dimensional secondary cone $\Cone(\triang)$ is given by the inequalities 
	\begin{equation*}
		\begin{split}
		\omega_1 + \omega_2 -\omega_3 \ &\geq\ 0 \qquad (\text{for edge $a$})\\
		\omega_1 - \omega_2 + \omega_3 \ &\geq\ 0 \qquad (\text{for edge $c$})\\ 
		-\omega_1 + \omega_2 + \omega_3 \ &\geq\ 0 \qquad (\text{for edge $b$}) \enspace .		
		\end{split}
	\end{equation*}
	The other three top-dimensional secondary cones correspond to the remaining three triangulations, each of which is obtained from $T$ by flipping either of the edges $a,b$ or $c$, introducing edges $a',b'$ or $c'$, respectively. 
	\begin{figure}[tbh]
		\input{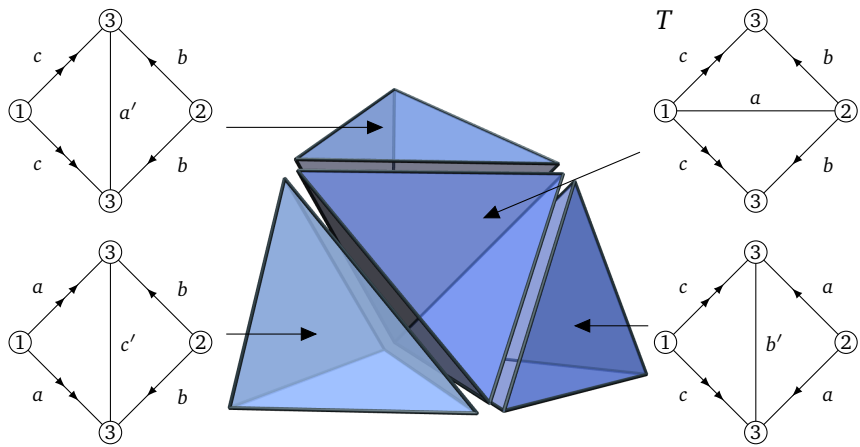}
		\caption{Secondary fan of the sphere with three punctures.}
		\label{fig:fanS3}
	\end{figure}	
\end{example}

\begin{example}[Torus with three punctures]\label{ex:T3}
	A triangulated torus with three punctures is depicted in the top left of Figure~\ref{fig:exT3}.
	Let $\Riem_{1,3}$ be the torus with three punctures that is obtained by setting all $\lambda$-lengths in the triangulation $\triang$ to $1$.
	The secondary fan $\secfan(\Riem_{1,3})$ is illustrated in Figure~\ref{fig:exT3}.
	The secondary cone $\Cone(T)$ is the central cone with four rays. The adjacent top-dimensional secondary cone above corresponds to the triangulation that is obtained from $T$ by flipping two edges.
	Furthermore, there is a top-dimensional cone that corresponds to a Delaunay decomposition which is not a triangulation, see bottom right of Figure~\ref{fig:exT3}.
	\begin{figure}[tbh]
			\input{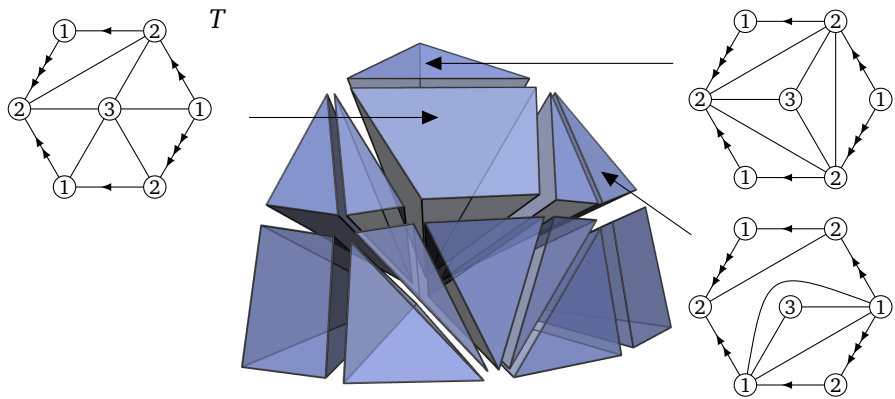}
			\caption{Secondary fan of the torus with three punctures $\Riem_{1,3}$ from Example~\ref{ex:T3}.} 
			\label{fig:exT3}
	\end{figure}
	
\end{example}

Examples~\ref{ex:T1}, \ref{ex:T2} and \ref{ex:T3} illustrate phenomena
that do not occur in the classical theory of secondary fans of point
configurations (cf.\ Section~\ref{sec:gkz}). In
Examples~\ref{ex:T1} and~\ref{ex:T3}, there are top-dimensional cones
which correspond to a Delaunay decomposition that is not a
triangulation.  In Example~\ref{ex:T2}~and~\ref{ex:T3}, there are
adjacent top-dimensional cones which correspond to triangulations that
differ by more than one flip.  In the classical theory of secondary
fans, top-dimensional cones correspond to triangulations and adjacent
top-dimensional cones correspond to triangulations that differ by a
single flip.

Both phenomena also do not occur in Penner's decomposition of the
whole decorated Teichm{\"u}ller space $\decTeich$. There, the
top-dimensional cells correspond to ideal triangulations, and cells of
codimension $k$ correspond to ideal decompositions with $k$ ideal arcs
omitted. Here, we consider the intersection of Penner's decomposition
with one fiber of $\decTeich$. Then the decomposition becomes a
polyhedral fan in the space of horocyclic lengths, but the above
statements are no longer valid.

\section{Secondary Polyhedra}\label{sec:poly}
\noindent
The secondary fan of a punctured Riemann surface $\Riem$ is the normal
fan of a secondary polyhedron, which we will construct in this
section. In fact, we will construct a family of secondary polyhedra
for $\Riem$, one for each point $x\in\Riem$.

The main idea is to transfer the Epstein--Penner convex hull
construction from the hyperboloid model to the hemisphere model via
the projective transformation~\eqref{eq:trafo} (cf.\
Figure~\ref{fig:domes}). Let $\C_\hemi$ denote the image of the convex
hull $\C$ (cf.\ equation~\eqref{eq:Chull}). The vertices of $\C_\hemi$
lie in the light cylinder $\Lplus_\hemi$. 

The group $\Gamma$ acts by projective transformations on both the
hemisphere $\hemi$ and its equatorial disk, which is a Klein disk
$\Klein$ translated to height $0$. Thus, the Riemann surface
$\Riem=\HH/\Gamma$ appears also as $\hemi/\Gamma$ and as
$\Klein/\Gamma$. The Delaunay decompositions of $\hemi/\Gamma$ and
$\Klein/\Gamma$ are obtained by projecting the faces of $\C_\hemi$
vertically to $\hemi$ and $\Klein$, respectively. 

The hemisphere and the equatorial Klein disk share the unit circle
$S^{1}$ at height~$0$ as their circle at infinity. The $i$th cusp of
$\Riem$ corresponds to a $\Gamma$-orbit $\PP_{i}\subseteq S^{1}$. A
horocycle at the $i$ cusp corresponds to a $\Gamma$-orbit
$\tilde{\B}_{i}\subseteq \Lplus_{\hemi}$, which is the image of
$\B_{i}$ under the transformation~\eqref{eq:trafo} and contains
exactly one point vertically above each point in $\PP_{i}$. The height
of a point in $\tilde{\B}_{i}$ depends only on the length $\omega_{i}$
of the corresponding horocycle in $\Riem$ and on the point
$p\in\PP_{i}$ above which it lies. 

\begin{lemma}\label{lemma:linearheights}
  The height $z_p(\omega_i)$ of the point in
  $\tilde{\B}_{i}$ above $p\in\PP_{i}$ depends linearly on
  $\omega_{i}$, i.e.,
  \begin{equation*}
    z_p(\omega_i) \ = \ \omega_i\,z_p(1) \enspace.
  \end{equation*}
\end{lemma}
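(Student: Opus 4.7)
The plan is to track how two quantities transform under a uniform rescaling of the light cone vectors in $\B_i$: the height of a representative in $\Lplus_\hemi$ on one hand, and the cusp weight $\omega_i$ on the other.

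First, I would unfold the projection~\eqref{eq:trafo}: it sends a light cone vector $v = (v_1, v_2, v_3) \in \Lplus$ to the point $(v_1/v_3,\, v_2/v_3,\, 1/v_3) \in \Lplus_\hemi$, so the height of the image equals exactly $1/v_3$. In particular, replacing $v$ by $cv$ with $c > 0$ divides this height by~$c$.

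Next, I would show that rescaling the entire $\Gamma$-orbit $\B_i$ by a factor $c > 0$ (which is consistent with $\Gamma$-equivariance because $\SO$ acts linearly, and which does not touch the orbits $\B_j$ for $j\neq i$) rescales the weight $\omega_i$ by $1/c$. Three ingredients combine: by~\eqref{eq:lambda_lightcone}, a $\lambda$-length of any edge with one endpoint in $\B_i$ scales by $\sqrt{c}$; by~\eqref{eq:h-lengths}, an $h$-length $\alpha^\Delta_{ab} = \lambda_c/(\lambda_a \lambda_b)$ at a corner incident to cusp $i$ has two incident edges $\lambda_a, \lambda_b$ (each scaling by $\sqrt{c}$) and an opposite edge $\lambda_c$ (unchanged, since both its endpoints avoid~$\B_i$), and therefore scales by $1/c$; finally by~\eqref{eq:omega_sum}, $\omega_i$ is a sum of such $h$-lengths and inherits the same scaling factor $1/c$.

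Combining: fix $p \in \PP_i$ and let $v_0$ denote the unique vector of $\B_i$ above $p$ for the unit decoration $\omega_i = 1$. To realize weight $\omega_i$, we must rescale the orbit by $1/\omega_i$, and the vector above $p$ becomes $v_0/\omega_i$. Its image under~\eqref{eq:trafo} has height $1/((v_0)_3/\omega_i) = \omega_i/(v_0)_3 = \omega_i \cdot z_p(1)$, which is the claim. The only point requiring mild care is the clean isolation of the scaling of $\B_i$ from the rest of the decoration; this is immediate because $\tilde{\B}_i$ is defined from the single orbit $\B_i$ alone.
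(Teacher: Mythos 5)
Your proposal is correct and follows essentially the same route as the paper's proof: both track the scaling of $\lambda$-lengths via~\eqref{eq:lambda_lightcone}, deduce the scaling of $h$-lengths via~\eqref{eq:h-lengths} and hence of $\omega_i$ via~\eqref{eq:omega_sum}, and then read off the height from the projection~\eqref{eq:trafo}. One small caveat: your justification that the opposite edge $\lambda_c$ is unchanged ``since both its endpoints avoid $\B_i$'' can fail for self-incident triangles (several vertices of a triangle may lie at the same cusp), but a case check shows the corner's $h$-length still scales by $1/c$ in every configuration, so the argument goes through.
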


\begin{proof}
  The point $p$ and the weight $1$ determine a unique point
  $v=(v_1,v_2,v_3)$ on the light cone $\Lplus\subseteq\R^{2,1}$.
  It follows from equation~\eqref{eq:lambda_lightcone} together with equation~\eqref{eq:h-lengths}, that scaling $v$ (and all the points in the $\Gamma$-orbit of $v$) by $\tfrac{1}{\omega_i}$ results in scaling the $h$-lengths at the $i$th cusp by $\omega_i$.
  Thus, by equation~\eqref{eq:omega_sum}, modifying the weight from $1$ to $\omega_i$ changes the point $v$ to $\tfrac{1}{\omega_i} v$.
  If we go from the hyperboloid model to the hemisphere model via the transformation~\eqref{eq:trafo}, we arrive at the point
  $
  \frac{1}{v_3} \, (v_1,v_2,\omega_{i}) \enspace . 
  $
  Hence we obtain $z_p(\omega_i) = \tfrac{\omega_i}{v_3} = \omega_i z_p(1)$.
\end{proof}

Let $\triang$ be an ideal triangulation of $\Riem$ and let
$\hat{\triang}$ be the corresponding $\Gamma$-invariant triangulation
of the Klein disk at height $0$. An ideal triangle $\Delta$ of
$\hat{\triang}$ is also a Euclidean triangle inscribed in the unit
circle. Let $\vol(\Delta)$ denote its Euclidean area. If the vertices
$p$, $q$, $r$ of $\Delta$ correspond to the cups $i$, $j$, $k$,
respectively, then the Euclidean volume of the truncated prism over
$\Delta$ with heights
$\big( z_p(\omega_{i}), z_q(\omega_{j}), z_r(\omega_{k}) \big)$ is
\begin{equation*}
  z_\Delta(\omega) \ := \ 
  \frac{1}{3} \vol (\Delta) \, 
  \bigl( z_p(\omega_{i}) + z_q(\omega_{j}) +
  z_r(\omega_{k}) 
  \bigr) \enspace .
\end{equation*}
We call the polyhedral complex generated by these skew prisms, formed from all ideal triangles in $\hat{\triang}$, the \emph{$\omega$-dome} of $\triang$; see Figure~\ref{fig:domes} (right) for an illustration.
The $\omega$-dome depends not only on $\Riem$, $\triang$ and
$\omega$, but also on the group $\Gamma$. A conjugate subgroup
$A\Gamma A^{-1}$ with $A\in\SO$ has the same Riemann surface as
quotient, but the $\omega$-dome is transformed by the projective
action of $A$. Unless this is a rotation around the vertical axis, the
volume of the transformed $\omega$-dome will in general be different. 
As a consequence, the volume depends on the point $x\in\Riem$ that
corresponds to the origin in the Klein model. We denote the volume of
the $\omega$-dome by $L_{x,\triang}(\omega)$ and obtain
\begin{equation*}
  L_{x,\triang}(\omega) \ = \ \sum_{\Delta\in\hat{\triang}} z_\Delta(\omega) \enspace .
\end{equation*}
Note that this infinite sum has a finite value because it it is the
limit of an increasing and bounded sequence.  The boundedness follows
from the fact that the union of orbits $\B\subseteq\Lplus$ is bounded
away from the origin~\cite{epstein_penner}, which implies that the
image $\tilde{\B}\subseteq\Lplus_\hemi$ is bounded in height.

\begin{lemma}\label{lemma:sumvolumes}
  The function $L_{x,\triang} \colon \Rpos^n \rightarrow \Rpos$ is linear.
\end{lemma}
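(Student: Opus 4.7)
The plan is to expand $z_\Delta(\omega)$ via Lemma~\ref{lemma:linearheights} so that it becomes a linear functional of $\omega$, then to interchange the two infinite summations implicit in the definition of $L_{x,\triang}$ in order to collect the total coefficient of each $\omega_i$.

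By Lemma~\ref{lemma:linearheights}, if $\Delta\in\hat\triang$ has vertices $p, q, r$ lying above cusps $i, j, k$, then
\begin{equation*}
z_\Delta(\omega) \ = \ \tfrac{1}{3}\vol(\Delta)\bigl(\omega_i\,z_p(1)+\omega_j\,z_q(1)+\omega_k\,z_r(1)\bigr),
\end{equation*}
so $z_\Delta$ is a linear functional on $\R^n$ with non-negative coefficients. For each $i\in\{1,\ldots,n\}$ set
\begin{equation*}
c_i \ := \sum_{\substack{\Delta\in\hat\triang,\ p\in\Delta\\ p\in\PP_i}} \tfrac{1}{3}\vol(\Delta)\,z_p(1)\ \in\ [0,\infty],
\end{equation*}
where the sum runs over all pairs consisting of a triangle $\Delta\in\hat\triang$ and a vertex of $\Delta$ that lies in the $\Gamma$-orbit $\PP_i$ corresponding to the $i$th cusp.

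Since every summand in the expansion $L_{x,\triang}(\omega)=\sum_{\Delta\in\hat\triang}z_\Delta(\omega)$ is non-negative for $\omega\in\Rpos^n$, Tonelli's theorem permits rearranging and regrouping the terms by cusp index, yielding
\begin{equation*}
L_{x,\triang}(\omega) \ = \ \sum_{i=1}^n c_i\,\omega_i
\end{equation*}
as an identity in $[0,\infty]$. Evaluating at the standard basis vector $\omega=e_i$ gives $c_i=L_{x,\triang}(e_i)$, which is finite by the boundedness argument recorded just before the lemma (the set $\tilde\B\subseteq\Lplus_\hemi$ is bounded in height, so the $\omega$-dome has finite volume for every non-negative weight). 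Hence $L_{x,\triang}$ agrees on $\Rpos^n$ with the linear functional $\omega\mapsto\sum_i c_i\,\omega_i$, as claimed.

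The only step requiring genuine care is the interchange of the two infinite summations; this is immediate from Tonelli's theorem given the non-negativity of all summands together with the a~priori finiteness of $L_{x,\triang}(\omega)$ noted in the paragraph preceding the lemma.
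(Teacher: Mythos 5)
Your proof is correct and follows essentially the same route as the paper: expand each $z_\Delta(\omega)$ via Lemma~\ref{lemma:linearheights} and regroup the resulting non-negative double sum by cusp index to read off the coefficient of each $\omega_i$ (the paper phrases the regrouping as rearranging an absolutely convergent series and records the coefficients using a multiplicity $m(\Delta,i)$, where you index by triangle--vertex pairs and invoke Tonelli, but these are the same argument). Your explicit check that each coefficient $c_i=L_{x,\triang}(e_i)$ is finite, via the boundedness of $\tilde\B$ in height, matches the remark preceding the lemma.
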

 
\begin{proof}
  For a vertex $p$ of a triangle $\Delta \in \hat{\triang}$ let
  $\pi(p)$ denote the cusp corresponding to the $\Gamma$-orbit of $p$,
  i.e., $\pi(p)=i$ if $p\in\PP_{i}$. We write $m( \Delta , i )$ for
  the \emph{multiplicity} of the $i$th cusp in the triangle $\Delta$,
  i.e., the number of vertices $p$ of $\Delta$ with $\pi(p)=i$. This
  is a number between zero and three. By rearranging the absolutely
  convergent sum and invoking Lemma~\ref{lemma:linearheights} we
  obtain
  \begin{equation*}
    \begin{split}
      L_{x,\triang}(\omega) \ &= \ \sum_{\Delta\in{\hat{\triang}}} z_\Delta(\omega) \ = \ \frac{1}{3} \sum_{\Delta \in {\hat{\triang}}} \vol (\Delta) \sum_{p \text{ vertex of } \Delta} \omega_{\pi (p)} z_p(1)  \\
      &= \ \frac{1}{3} \sum_{i=1}^n \omega_i \sum_{ \Delta \in {\hat{\triang}} } m( \Delta , i ) ~ \vol (\Delta) ~ \sum_{p \text{ vertex of } \Delta} z_p(1) \\
      &= \ \frac{1}{3} \left\langle \omega , \; \sum_{i=1}^n  \left( \sum_{\Delta \in {\hat{\triang}}} m( \Delta , i ) ~ \vol (\Delta) ~ \sum_{p \text{ vertex of } \Delta} z_p(1) \right) e_i \right\rangle \enspace.
    \end{split}
  \end{equation*}
\end{proof}

\begin{definition}\label{def:gkz}
  We call
  \[ \phi_{x,\triang} \ = \ \sum_{i=1}^n  \left( \sum_{\Delta \in {\hat{\triang}}} m ( \Delta , i ) ~ \vol (\Delta) ~ \sum_{p \text{ vertex of } \Delta} z_p(1) \right) e_i  \]
  the \emph{GKZ vector} of the ideal triangulation $\triang$. 
\end{definition}
By definition we have $ L_{x,T}(\omega) = \tfrac{1}{3}\scalp{ \omega , \phi_{x,T} }$.
The following is our hyperbolic analog of a key characterization of regular triangulations in the Euclidean setting; see \cite[Lemma 5.2.14]{bibel}.
\begin{proposition}[Hyperbolic Crucial Lifting Lemma]\label{prop:crucial}
  Let $\triang$ be a triangulation of $\Riem$, $x\in\Riem$ and let $\omega \in \Rpos^n \setminus \{0\}$ be an arbitrary weight vector.
  Then the following are equivalent: 
  \begin{enumerate}[label = (\roman*)]
  \item The triangulation $\triang$ refines the Delaunay decomposition $\DD(\omega)$.
  \item We have $L_{x,\triang}(\omega) \geq L_{x,\triang'}(\omega)$ for all triangulations $\triang'$ of $\Riem$, with equality if and only if $\triang'$ refines $\DD(\omega)$ as well.
  \end{enumerate} 
\end{proposition}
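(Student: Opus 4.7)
The plan is to mimic the classical argument of \cite[Lem.~5.2.14]{bibel}: interpret $L_{x,\triang}(\omega)$ as the Euclidean integral of a piecewise linear roof over the Klein disk, and then compare this roof pointwise to the roof of the Epstein--Penner dome $\C_\hemi$. Let $\hat{\DD}(\omega)$ denote the $\Gamma$-invariant lift of $\DD(\omega)$ to the closure $\overline{\Klein}$, and define $g_{\omega}\colon\overline{\Klein}\to\Rpos$ by
\[
g_{\omega}(x_1,x_2) \ = \ \max\bigl\{\,x_3 \,\bigm|\, (x_1,x_2,x_3)\in\C_\hemi\,\bigr\},
\]
so that the graph of $g_{\omega}$ is the upper boundary of $\C_\hemi$. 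By Proposition~\ref{prop:convexhull_structure}, $g_{\omega}$ is affine on each cell of $\hat{\DD}(\omega)$ and satisfies $g_{\omega}(p)=z_{p}(\omega_{\pi(p)})$ at each horocyclic vertex $p$ on the boundary circle.

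For an ideal triangulation $\triang$ of $\Riem$ with $\Gamma$-invariant lift $\hat\triang$, let $f_{\triang,\omega}$ be the piecewise linear function on $\overline{\Klein}$ that is affine on every triangle of $\hat\triang$ and agrees with $g_{\omega}$ at all vertices. Each truncated prism satisfies $z_{\Delta}(\omega)=\int_{\Delta}f_{\triang,\omega}(x)\,dx$, and summing over $\Delta\in\hat\triang$ with the same absolute convergence argument as in the proof of Lemma~\ref{lemma:sumvolumes} gives
\[
L_{x,\triang}(\omega) \ = \ \int_{\Klein}f_{\triang,\omega}(x)\,dx.
\]
The comparison step is elementary and only uses convexity of $\C_\hemi$: for any $q$ in a triangle $\Delta\in\hat\triang$ with vertices $p_1,p_2,p_3$, writing $q=\sum_i\lambda_i p_i$ as a convex combination shows that $\sum_i\lambda_i(p_i,z_{p_i}(\omega_{\pi(p_i)}))\in\C_\hemi$, which forces $g_{\omega}(q)\geq f_{\triang,\omega}(q)$. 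Equality holds throughout $\Delta$ iff $g_{\omega}$ is affine on $\Delta$, which by the correspondence between faces of $\C_\hemi$ and cells of $\hat{\DD}(\omega)$ is equivalent to $\Delta$ being contained in a single cell of $\hat{\DD}(\omega)$.

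Integrating over all triangles yields $L_{x,\triang}(\omega)\leq\int_{\Klein}g_{\omega}$ for every $\triang$, with equality precisely when $\triang$ refines $\DD(\omega)$. The right-hand side is independent of $\triang$ and is attained by any triangulation refining $\DD(\omega)$ (such triangulations always exist, e.g., by adding diagonals to each polygonal cell of $\DD(\omega)$ without introducing new vertices), so the equivalence follows at once. Implication (ii)$\Rightarrow$(i) is immediate from the equality clause applied with $\triang'=\triang$, while (i)$\Rightarrow$(ii) holds because a refining $\triang$ attains the common maximum $\int_{\Klein}g_{\omega}$ and, by the equality characterization just established, any other triangulation attains that maximum iff it too refines $\DD(\omega)$.

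The main technical obstacle will be making the convexity-based comparison rigorous despite the vertices of $\hat\triang$ sitting on the ideal boundary $\partial\Klein$ rather than in $\Klein$ itself, and despite $\hat\triang$ containing infinitely many triangles. Both issues are controlled by the discreteness of the lifted orbit $\tilde{\B}$ in $\Lplus_\hemi\cup\{0\}$ and the uniform upper bound on the heights $z_p(\omega_i)$ guaranteed by Proposition~\ref{prop:convexhull_structure}; these ensure that $g_{\omega}$ is well defined, bounded, and piecewise affine over a locally finite decomposition of $\overline{\Klein}$, so that the pointwise inequality can be integrated triangle by triangle.
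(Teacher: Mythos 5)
Your proposal is correct and follows essentially the same route as the paper: the paper compares the volume of the $\omega$-dome of an arbitrary triangulation with the volume of $\C_\hemi(\omega)$ by observing that each skew prism is contained in the hull by convexity, and that its top triangle lies in a face of $\C_\hemi(\omega)$ exactly when the triangle refines a cell of $\DD(\omega)$ — which is precisely your pointwise inequality $f_{\triang,\omega}\leq g_\omega$ and its equality case, restated as an integral. Your version merely makes the roof-function formulation and the boundary/local-finiteness caveats explicit, which the paper leaves implicit.
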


\begin{proof}
  The Delaunay decomposition $\DD(\omega)$ in the Klein disk is
  obtained by orthogonally projecting the upper boundary of the convex hull $\C_\hemi(\omega)$ to the Klein disk.
  Thus $\triang$ refines  $\DD(\omega)$ if and only if the $\omega$-dome of $\triang$ equals $\C_\hemi(\omega)$.
	
  Let $\triang'$ be an arbitrary triangulation of $\Riem$ and let $\Delta$ be some triangle in $\triang'$.
  The skew prism over $\Delta$ with heights prescribed by $\omega$ is contained in $\C_\hemi(\omega)$ by convexity. 
  Furthermore, its top triangle is contained in a face of $\C_\hemi(\omega)$ if and only if $\Delta$ refines a cell of $\DD(\omega)$.
  It follows that the volume of the $\omega$-dome of $\triang'$ does not exceed the volume of $\C_\hemi(\omega)$.
  These two volumes coincide if and only if each triangle of $\triang'$ refines a cell of $\DD(\omega)$.	  
\end{proof}

\begin{figure}[tbh]
  \begin{minipage}{0.45\textwidth}
    \centering\includegraphics[width=0.9\linewidth]{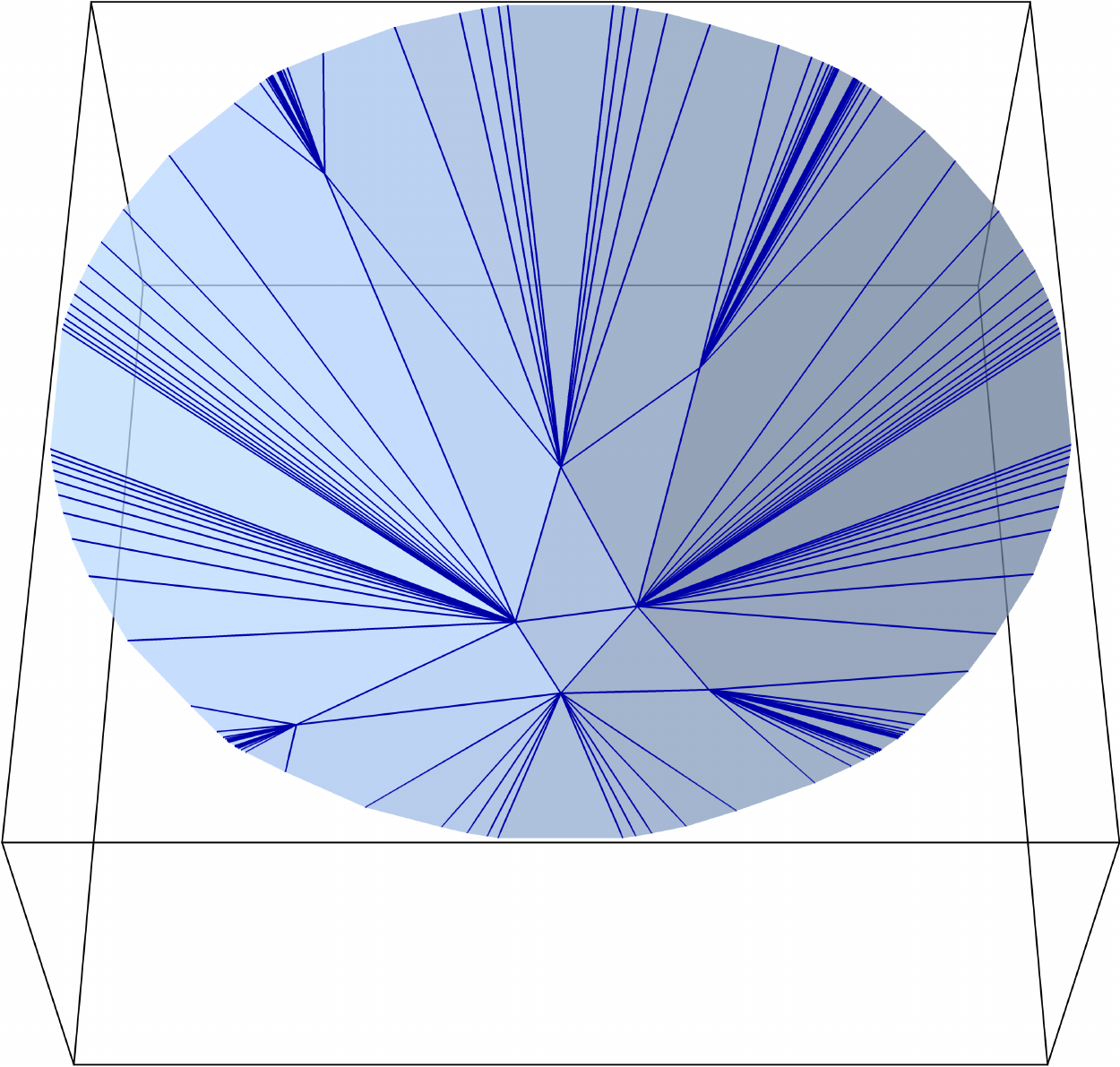}
  \end{minipage}
  \quad
  \begin{minipage}{0.45\textwidth}
    \centering\includegraphics[width=0.9\linewidth]{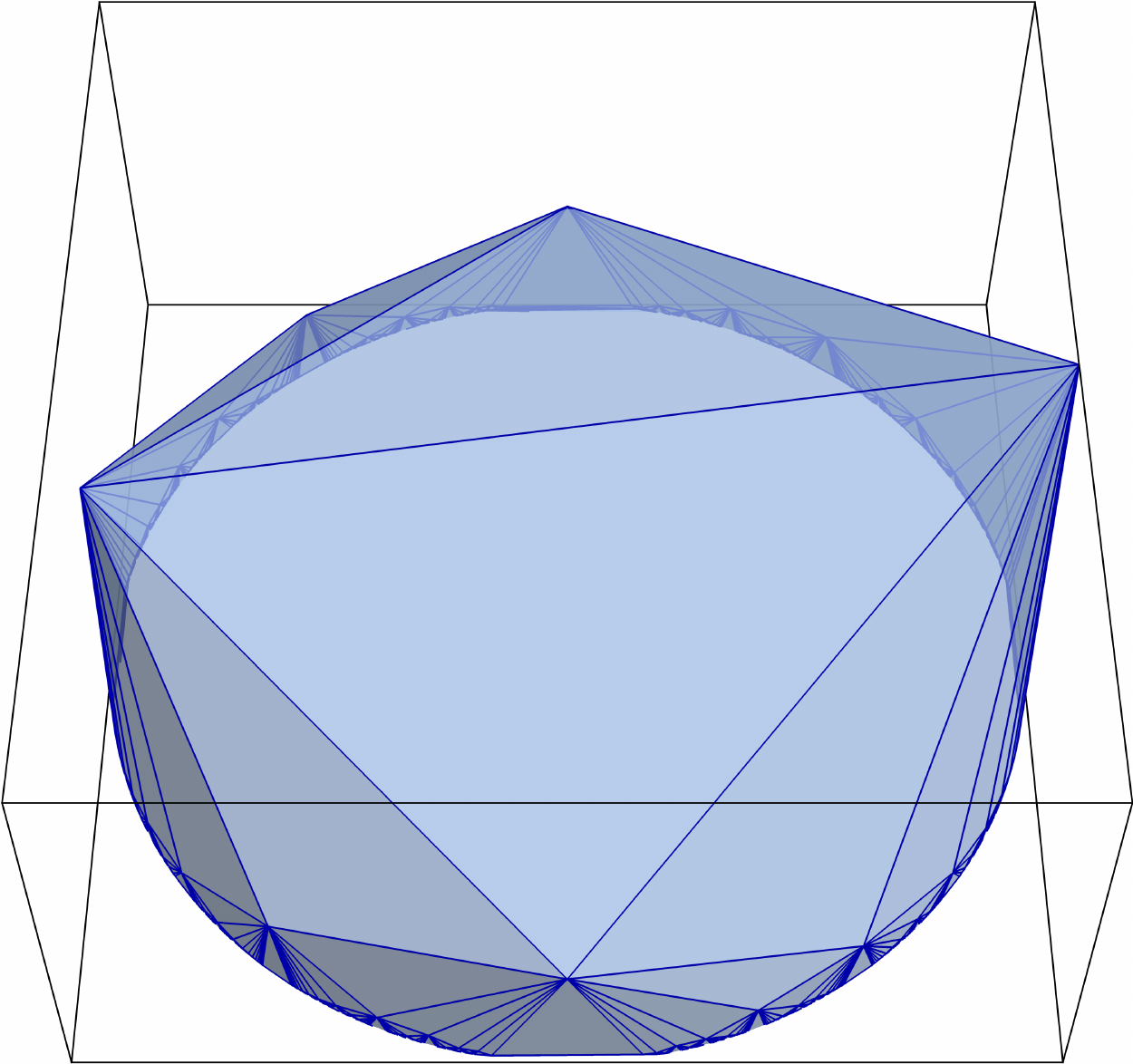}
  \end{minipage}
\caption{Convex hull construction in the hyperboloid model (left) and
  in the hemisphere model (right)}
  \label{fig:domes}
\end{figure}

\begin{definition}\label{def:secpoly}
  For a point $x\in\Riem$ the \emph{secondary polyhedron} of ($\Riem,x$) is defined as
  \begin{equation}
    \label{eq:secpoly}
    \secpoly(\Riem,x) \ = \ \conv \{ ~\phi_{x,T}~|~T \text{ triangulation of } \Riem~ \} ~+~ \R_{\leq0}^n \enspace.
  \end{equation}
\end{definition}


\begin{theorem}
  \label{thm:normalfan}
	For each $x\in\Riem$, the secondary fan $\secfan(\Riem)$ is the outer normal fan of the secondary polyhedron $\secpoly(\Riem,x)$.
\end{theorem}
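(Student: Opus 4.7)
The strategy is to identify, for each Delaunay decomposition $\DD$ of $\Riem$, a face $F_{\DD}$ of $\secpoly(\Riem, x)$ whose outer normal cone equals the secondary cone $\Cone(\DD)$. The Crucial Lifting Lemma (Proposition~\ref{prop:crucial}) is the pivotal tool: via the identity $L_{x,T}(\omega) = \tfrac{1}{3}\langle \omega, \phi_{x,T}\rangle$ of Lemma~\ref{lemma:sumvolumes}, it translates the convex-geometric statement that $\langle \omega, \phi_{x,T}\rangle$ is maximized at a triangulation $T$ into the combinatorial statement that $T \preceq \DD(\omega)$.

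First I would verify that a linear functional $\langle \omega, \cdot\rangle$ is bounded above on $\secpoly(\Riem, x)$ precisely when $\omega \in \R_{\geq 0}^n$, using that the recession cone of $\secpoly(\Riem, x)$ equals $\R_{\leq 0}^n$. For such $\omega \neq 0$, the face $F(\omega)$ on which $\omega$ attains its maximum is
\[
F(\omega) \ = \ \conv\bigl\{\,\phi_{x,T}\,:\, T \preceq \DD(\omega)\,\bigr\} \ + \ \bigl\{\,v \in \R_{\leq 0}^n\,:\, v_i = 0 \text{ when } \omega_i > 0\,\bigr\}.
\]
The first summand follows from Proposition~\ref{prop:crucial} applied to the GKZ vectors; the second is the recession cone of $F(\omega)$, namely the directions $v \in \R_{\leq 0}^n$ on which $\omega$ vanishes. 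Since the Epstein--Penner construction places a horocycle at cusp $i$ exactly when $\omega_i > 0$, and this happens iff cusp $i$ is a vertex of $\DD(\omega)$, the zero pattern of $\omega$ is already determined by $\DD(\omega)$. Hence $F(\omega)$ depends only on $\DD(\omega)$, and one may set $F_{\DD} := F(\omega)$ for any $\omega$ with $\DD(\omega) = \DD$.

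Next I would compute the outer normal cone $N(F_{\DD})$ and show it equals $\Cone(\DD)$. By definition, $\omega \in N(F_{\DD})$ iff $F_{\DD} \subseteq F(\omega)$. Unpacking, this means every $\phi_{x,T}$ with $T \preceq \DD$ lies in $F(\omega)$, and every recession direction of $F_{\DD}$ is a recession direction of $F(\omega)$. The first condition, via the Crucial Lifting Lemma, forces every triangulation $T \preceq \DD$ also to satisfy $T \preceq \DD(\omega)$, which is equivalent to $\DD \preceq \DD(\omega)$. The second condition is equivalent to $\{i : \omega_i > 0\} \subseteq I(\DD)$, where $I(\DD)$ denotes the set of vertex cusps of $\DD$; this is automatic from $\DD \preceq \DD(\omega)$, since coarsening can only lose vertex cusps (a merged cell can only absorb cusps in its interior). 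Thus $N(F_{\DD}) = \{\omega \in \R_{\geq 0}^n : \DD \preceq \DD(\omega)\} = \Cone(\DD)$ by Definition~\ref{def:cone}. Combined with the fact that the secondary cones tile $\R_{\geq 0}^n \setminus \{0\}$ (Theorem~\ref{def:sfan}) and that the outer normal fan of $\secpoly(\Riem, x)$ has support $\R_{\geq 0}^n$ (being dual to the recession cone $\R_{\leq 0}^n$), this identifies the two fans. The main technical point I anticipate is the vertex-set monotonicity $I(\DD(\omega)) \subseteq I(\DD)$ used for the recession-cone step, which relies on a combinatorial argument about how cusps behave when faces of a Delaunay decomposition are amalgamated.
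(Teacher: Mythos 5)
Your argument is correct and follows essentially the same route as the paper's: both reduce the identification of normal cones to Proposition~\ref{prop:crucial} via the identity $L_{x,T}(\omega)=\tfrac{1}{3}\scalp{\omega,\phi_{x,T}}$. The paper's proof is terser, only verifying $N_{\secpoly}(\phi_{x,T})=\Cone(T)$ for each triangulation $T$ and leaving the reverse inclusion to the reader, whereas you additionally track the faces $F_{\DD}$ and their recession cones; that extra bookkeeping is sound but not a different method.
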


\begin{proof}
  Let $T$ be a triangulation of $\Riem$.  Denote by
  $N_{\secpoly}(\phi_{x,T})$ the the outer normal cone of the GKZ
  vector $\phi_{x,T}$ in $\secpoly(\Riem,x)$.  It suffices to show that
  \begin{equation*}
    N_{\secpoly}(\phi_{x,T}) = \Cone(T) \enspace.
  \end{equation*}
  First, let $\omega \in \Cone(T)$.  By definition
  $T \preceq D(\omega)$.  Since
  $L_{x,T}(\omega)=\frac{1}{3}\scalp{\omega , \phi_{x,T}}$, it follows from
  Proposition~\ref{prop:crucial} that
  \begin{equation*}
    \scalp{\omega , \phi_{x,T}} \geq \scalp{\omega , \phi_{x,T^\prime}}
  \end{equation*}
  for all triangulations $T^\prime$ of $\Riem$. This shows that
  $ N_{\secpoly}(\phi_T) \supseteq \Cone(\Riem, T). $ The other
  inclusion follows similarly.
\end{proof}

\begin{example}\label{ex:T21poly}
	We continue the case of the twice punctured torus $\Riem_{1,2}$ from Example~\ref{ex:T2}.
	The vertices of the secondary polyhedron $\secpoly(\Riem_{1,2})$ are the GKZ vectors of the two Delaunay triangulations $\DD$ and $\DD'$, approximately given by
	\[ \phi_\DD = (2.15606 ,\; 0.63724)~, \quad \phi_{\DD'} = (0.58118 ,\; 2.21212) \enspace .\]
	Note that both $\phi_\DD$ and $\phi_{\DD'}$ lie on the hyperplane defined by $\omega_1 + \omega_2 = 2.7933$.
	Thus, the bounded edge of that connects the two vertices has outer normal vector $(1,1)$.
	It follows that the outer normal fan of $\secpoly(\Riem_{1,2})$ equals $\secfan(\Riem_{1,2})$, indeed.
	Now consider the triangulation $\triang$ obtained by flipping one black/black edge in $\DD$.
	The GKZ vector of that triangulation is $\phi_\triang = ( 1.8622,\; 0.9311)$ and it lies in the relative interior of the bounded edge. 
	Since $\triang$ refines the Delaunay decomposition $\DD(1,1)$, this illustrates the case of equality in Proposition~\ref{prop:crucial}.
	By further flipping a black/white edge of $\triang$, we get a triangulation $\triang'$ that does not refine any Delaunay decomposition.
	Note that we change the fundamental hexagon in order to illustrate this triangulation in Figure~\ref{fig:gkz_vertices}.
	The corresponding GKZ vector is $\phi_{\triang'} = (1.79904,\; 0.55685)$ and lies in the interior of the secondary polyhedron.
	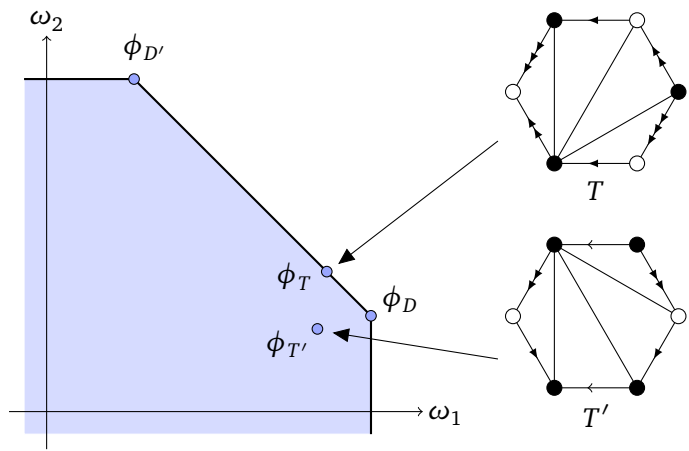
\begin{figure}[tbh]
	  \begin{tikzpicture}[scale=1]

\draw[color=white,fill=blue2!20] (-0.3,-0.3) -- (-0.3,2*2.21212) -- (2*0.58118,2*2.21212) -- (4.31212,1.27448) -- (4.31212,-0.3) -- cycle;
\draw[thick] (-0.3,2*2.21212) -- (2*0.58118,2*2.21212) -- (4.31212,1.27448) -- (4.31212,-0.3);
\draw[<-] (0,5) -- (0,-0.5);	
\draw[<-] (5,0) -- (-0.5,0);
\node at (0,5.2) {$\omega_2$};
\node at (5.3,-0.05) {$\omega_1$}; 
\filldraw[fill=blue2!50] (4.31212,1.27448) circle (2pt);
\filldraw[fill=blue2!50] (2*0.58118,2*2.21212) circle (2pt);
\filldraw[fill=blue2!50] (2*1.8622,2*0.9311) circle (2pt);
\filldraw[fill=blue2!50] (2*1.8, 1.1) circle (2pt);
\node at (4.7,1.5) {$\phi_\DD$};
\node at (1.3,4.85) {$\phi_{\DD'}$};
\node at (3.3,1.8) {$\phi_\triang$};
\node at (3.2,0.9) {$\phi_{\triang'}$};
\draw[->,>=triangle 45] (6,3.6) -- (2*1.8622+0.15,2*0.9311+0.1);
\draw[->,>=triangle 45] (6,0.7) -- (3.8,1.05);

\node at (7.3,4) {
	\begin{tikzpicture}[scale=1.1]
	\hexagonflipone
	\node at (0,-1.2) {$\triang$};
	\end{tikzpicture}
	};

\node at (7.3,1) {
	\begin{tikzpicture}[scale=1.1]
	\hexagonfliponefour
	\node at (0,-1.2) {$\triang'$};
	\end{tikzpicture}
	};

\end{tikzpicture}
	  \caption{The secondary polyhedron of the twice punctured torus from Example~\ref{ex:T21poly}.}
	  \label{fig:gkz_vertices}
	\end{figure}
\end{example}

\section{Calculating Secondary Cones, Fans and Polyhedra}
\label{sec:computing}
\noindent
We briefly want to sketch how the examples in the previous section have been computed.
Our method is implemented in \polymake \cite{polymake:2000}.
Throughout the decorated punctured Riemann surface $\Riem$ is given in terms of Penner coordinates, i.e., some topological triangulation $\triang$ of $\Sgn$ together with $\lambda$-lengths of its edges.

The first task is to compute a secondary cone $\Cone(\omega)$ that contains a given weight vector $\omega$ in its relative interior.
Starting out with the triangulation $\triang$ we can employ Week's flip-algorithm \cite{Weeks:1993} to obtain a Delaunay triangulation $\DD$ that refines $\DD(\omega)$ by successively flipping edges which violate the local Delaunay condition (cf.\ Lemma~\ref{lem:local_Del}).
Then the secondary cone $\Cone(\omega)=\Cone(\DD(\omega))$ is
determined by the inequalities \eqref{eq:edge_ineq}. (For a detailed analysis of the flip-algorithm, including a proof that
works also for projective surfaces, see \cite{TillmannWong:2016}. The
case of partially decorated surfaces is discussed in \cite[\S5]{springb_uniform}.)

This allows us to compute the entire secondary fan as follows.
We pick some positive weight vector $\omega$ and compute the secondary cone $\Cone(\omega)$ by the subroutine which we described above.
Generically, $\Cone(\omega)$ is top-dimensional. Otherwise, we pick
another random $\omega$ and try again until $\Cone(\omega)$ is top
dimensional.
%
The goal now is to compute the rest of the secondary fan via a breadth-first search in the dual graph of the secondary fan.
More precisely, we maintain a queue of pairs $(C,F)$, where $C$ is a top-dimensional secondary cone and $F$ is a facet of $C$ which does not lie in the boundary of the positive orthant.
This queue is initialized with $\Cone(\omega)$ and its facets.
The main loop of the algorithm picks a pair $(C,F)$ from the queue while it is not empty.
For a weight vector $\omega'$ with $C'=\Cone(\omega')$ we pick another weight vector $\omega''$ on the line perpendicular to $F$ which contains $\omega'$ such that $C''=\Cone(\omega'')$ and $C'$ are adjacent, i.e., they share $F$ as a common facet.
It may happen that $C''$ is a top-dimensional cone that we saw before.
If, however, the secondary cone $C''$ is new then the pairs formed by $C''$ and its facets other than $F$ are added to the queue.
This algorithm for computing the secondary fan of a punctured Riemann surface is very similar to the method implemented in \gfan \cite{gfan} for the Euclidean setting.

It remains to explain how to obtain the vertices of the secondary polyhedra.
For this we loop through all top-dimensional cones of the secondary fan and compute the GKZ-vector $\phi_\DD$ from some Delaunay triangulation $\DD$ as in Definition~\ref{def:gkz}.
Clearly, that formula does not yield a finite procedure, which is why we need to be content with an approximation of $\phi_D$.

\section{Concluding Remarks and Open Questions}\label{sec:concluding}
\noindent
A standard way of measuring the combinatorial complexity of a convex polytope, or a combinatorial manifold is the \emph{face vector}, or \emph{$f$-vector} for short, which records the number of cells per dimension.
\begin{question}
  What are the possible $f$-vectors of the secondary fans and the secondary polyhedra of punctured Riemann surfaces?
\end{question}
Upper bounds on $f$-vectors often translate into upper bounds on the complexity of related algorithms.
Since the Delaunay triangulations correspond to secondary cones of maximal dimension, answering the previous question would, in particular, imply an upper bound on their number.

The convex hull construction of Epstein and Penner works for 
cusped hyperbolic manifolds of arbitrary dimension. Our construction of secondary fans and
secondary polyhedra also generalizes to the higher dimensional setting
in a straightforward way. Similarly, there is a version of the
GKZ-construction for compact or cusped hyperbolic manifolds with
marked points. This leads to the study of regular triangulations and
decompositions of compact hyperbolic manifolds. Moreover, in dimension
two, one may allow surfaces with cone-like singularities. While all these
generalizations and extensions are fairly straightforward, many
details of the constructions and some phenomena that may be observed are
particular to each setting. 
In the present article we avoided the maximal possible generality in 
favor of a concise presentation of our key ideas.

Another direction for future research involves manifolds with a
geometric structure that is< not metric. For example, Cooper and Long \cite{CooperLong:2015} recently
generalized the Epstein--Penner construction to projective manifolds.
So the following question is natural.
\begin{question}
  Do secondary fans and secondary polyhedra of projective manifolds exist?
\end{question}

Our construction yields a secondary polyhedron for each point
in a punctured Riemann surface.
\begin{question}
  Can anything interesting be said about the dependence of the secondary
  polyhedron $\secpoly(\Riem,x)$ on the point $x\in\Riem$?
\end{question}


\goodbreak

\bibliography{main}

\begin{thebibliography}{GLSW13}

\bibitem[Aki01]{akiyoshi}
Hirotaka Akiyoshi.
\newblock Finiteness of polyhedral decompositions of cusped hyperbolic
  manifolds obtained by the {E}pstein-{P}enner's method.
\newblock {\em Proc. Amer. Math. Soc.}, 129(8):2431--2439, 2001.

\bibitem[Bea95]{Beardon}
Alan~F. Beardon.
\newblock {\em The geometry of discrete groups}, volume~91 of {\em Graduate
  Texts in Mathematics}.
\newblock Springer-Verlag, New York, 1995.
\newblock Corrected reprint of the 1983 original.

\bibitem[BS92]{BilleraSturmfels:1992}
Louis~J. Billera and Bernd Sturmfels.
\newblock Fiber polytopes.
\newblock {\em Ann. of Math. (2)}, 135(3):527--549, 1992.

\bibitem[CFKP97]{Flavors}
James~W. Cannon, William~J. Floyd, Richard Kenyon, and Walter~R. Parry.
\newblock Hyperbolic geometry.
\newblock In {\em Flavors of geometry}, volume~31 of {\em Math. Sci. Res. Inst.
  Publ.}, pages 59--115. Cambridge Univ. Press, Cambridge, 1997.

\bibitem[CL15]{CooperLong:2015}
D.~Cooper and D.~D. Long.
\newblock A generalization of the {E}pstein-{P}enner construction to projective
  manifolds.
\newblock {\em Proc. Amer. Math. Soc.}, 143(10):4561--4569, 2015.

\bibitem[DLRS10]{bibel}
Jes\'us~A. De~Loera, J\"org Rambau, and Francisco Santos.
\newblock {\em Triangulations}, volume~25 of {\em Algorithms and Computation in
  Mathematics}.
\newblock Springer-Verlag, Berlin, 2010.

\bibitem[EP88]{epstein_penner}
D.~B.~A. Epstein and R.~C. Penner.
\newblock Euclidean decompositions of noncompact hyperbolic manifolds.
\newblock {\em J. Differential Geom.}, 27(1):67--80, 1988.

\bibitem[GJ00]{polymake:2000}
Ewgenij Gawrilow and Michael Joswig.
\newblock {\tt polymake}: a framework for analyzing convex polytopes.
\newblock In Gil Kalai and G\"unter~M. Ziegler, editors, {\em Polytopes ---
  Combinatorics and Computation}, pages 43--74. Birkh\"auser, 2000.

\bibitem[GKZ08]{gkz}
I.~M. Gelfand, M.~M. Kapranov, and A.~V. Zelevinsky.
\newblock {\em Discriminants, Resultants and Multidimensional Determinants}.
\newblock Birkh\"auser, Boston, 2008.
\newblock Reprint of the 1994 edition.

\bibitem[GLSW13]{luo}
Xianfeng Gu, Feng Luo, Jian Sun, and Tianqi Wu.
\newblock A discrete uniformization theorem for polyhedral surfaces.
\newblock arXiv:1309.4175v1 [math.GT], 2013.

\bibitem[Jen17]{gfan}
Anders~N. Jensen.
\newblock {G}fan, a software system for {G}r{\"o}bner fans and tropical
  varieties, version 0.6.
\newblock Available at
  \url{http://home.imf.au.dk/jensen/software/gfan/gfan.html}, 2017.

\bibitem[Kap09]{Kapovich}
Michael Kapovich.
\newblock {\em Hyperbolic manifolds and discrete groups}.
\newblock Modern Birkh\"auser Classics. Birkh\"auser Boston, Inc., Boston, MA,
  2009.
\newblock Reprint of the 2001 edition.

\bibitem[Kat92]{Katok}
Svetlana Katok.
\newblock {\em Fuchsian groups}.
\newblock Chicago Lectures in Mathematics. University of Chicago Press,
  Chicago, IL, 1992.

\bibitem[Leh66]{Lehner}
Joseph Lehner.
\newblock {\em A short course in automorphic functions}.
\newblock Holt, Rinehart and Winston, New York-Toronto, Ont.-London, 1966.

\bibitem[Pen87]{penner1}
Robert~C. Penner.
\newblock The decorated {T}eichm\"uller space of punctured surfaces.
\newblock {\em Comm. Math. Phys.}, 113(2):299--339, 1987.

\bibitem[Pen12]{pennerbook}
Robert~C. Penner.
\newblock {\em Decorated {T}eichm\"uller Theory}.
\newblock QGM Master Class Series. European Mathematical Society, Z{\"u}rich,
  2012.

\bibitem[Spr17]{springb_uniform}
Boris Springborn.
\newblock Hyperbolic polyhedra and discrete uniformization.
\newblock arXiv:1707.06848 [math.MG], 2017.

\bibitem[Thu97]{Thurston}
William~P. Thurston.
\newblock {\em Three-dimensional geometry and topology. {V}ol. 1}, volume~35 of
  {\em Princeton Mathematical Series}.
\newblock Princeton University Press, Princeton, NJ, 1997.
\newblock Edited by Silvio Levy.

\bibitem[TW16]{TillmannWong:2016}
Stephan Tillmann and Sampson Wong.
\newblock An algorithm for the {E}uclidean cell decomposition of a cusped
  strictly convex projective surface.
\newblock {\em J. Comput. Geom.}, 7(1):237--255, 2016.

\bibitem[Wee93]{Weeks:1993}
Jeffrey~R. Weeks.
\newblock Convex hulls and isometries of cusped hyperbolic {$3$}-manifolds.
\newblock {\em Topology Appl.}, 52(2):127--149, 1993.

\end{thebibliography}
\bibliographystyle{alpha}

\end{document}